\theoremstyle{plain}
\newtheorem{thm}{Theorem}[section]
\newtheorem{theorem}[thm]{Theorem}
\newtheorem{lemma}[thm]{Lemma}
\theoremstyle{definition}
\newtheorem{remark}[thm]{Remark}
\newtheorem{notation}[thm]{Notation}
\newtheorem{definition}[thm]{Definition}
\newtheorem{example}[thm]{Example}
\numberwithin{equation}{section}
\newcommand{\wt}{\widetilde}
\newcommand{\ti}{\times}
\newcommand{\rt}{\rtimes}
\newcommand{\la}{{\langle}}
\newcommand{\ra}{{\rangle}}
\newcommand{\al}{\alpha}
\newcommand{\Aut}{{\rm Aut}}
\newcommand{\Tr}{{\rm tr}}
\newcommand{\Diag}{{\rm diag}}
\newcommand{\Ord}{{\rm ord}}
\newcommand{\GL}{{\rm GL}}
\newcommand{\PGL}{{\rm PGL}}
\newcommand{\Det}{{\rm det}}
\newcommand{\SL}{{\rm SL}}
\newcommand{\C}{{\mathbb C}}
\renewcommand{\P}{{\mathbb P}}
\newcommand{\Q}{{\mathbb Q}}
\begin{document}

% \title{Mccorrespondence and new Calabi-Yau threefolds}
 
 \title{McKay correspondence and New Calabi-Yau threefolds}
 
 \author{Xun Yu} 
\address{Center for Geometry and its Applications, POSTECH, Pohang, 790-784, Korea}
\email{yxn100135@postech.ac.kr}

%\thanks{Mathematics Subject Classification (2010). Primary 14H45; Secondary 14H50, 14J32, 14N25}

\begin{abstract} In this note, we consider crepant resolutions of the quotient varieties of smooth quintic threefolds by Gorenstein group actions. We compute their Hodge numbers via McKay correspondence. In this way, we find some new pairs $(h^{1,1},\;h^{2,1})$ of Hodge numbers of Calabi-Yau threefolds.
\end{abstract}

\maketitle

\fancyhf{}
\renewcommand{\headrulewidth}{0pt}
\fancyhead[LE,RO]{\thepage}
\fancyhead[CE]{Xun Yu }
\fancyhead[CO]{McKay correspondence and new Calabi-Yau threefolds}

\section{Introduction}

Calabi-Yau threefolds are important and interesting objects both in algebraic geometry (e.g., classification theory ) and physics (e.g., mirror symmetry), and they have been extensively studied by many people in the last few decades. Unlike lower dimensional Calabi-Yau manifolds (elliptic curves, K3 surfaces) classification of topological types of Calabi-Yau threefolds turns out to be very hard (cf. \cite{Re02}). In fact, even boundedness of their Hodge numbers $h^{1,1}$ and $h^{2,1}$  is still an open problem. Thus, constructing Calabi-Yau threefolds with ``new'' pairs of Hodge numbers ($h^{1,1},\;h^{2,1}$) is of interest. 

Let $X$ be a smooth quintic threefold, a most basic example of Calabi-Yau threefolds, and let $G$ be a Gorenstein  subgroup of the automorphism group $\Aut(X)$ of $X$. Recently both $\Aut(X)$ and $G$ are essentially classified (\cite{OY15}). By \cite[Theorem 1.2]{BKR01} the quotient variety $X/G$ always has a crepant resolution, say $\widehat{X/G}$, which is again a Calabi-Yau threefold.  The aim of this note, is to compute the Hodge numbers of $\widehat{X/G}$ for various pairs $(X,G)$. Our main result is Theorem \ref{thm:main} (see also Remark \ref{rm:main}). In particular, there are $43$ pairs of integers appearing as $(h^{1,1}(\widehat{X/G}), h^{2,1}(\widehat{X/G}))$, and some of them represent ``new'' Calabi-Yau threefolds. In the literature, the resolutions $\widehat{X/G}$ for some pairs $(X,G)$ have been studied (for examples, \cite{GP90}, \cite{ALR90}, \cite{CDGP91},  \cite{Za93}, \cite{BD96}, \cite{BGK12}, \cite{St12}, \cite{BG14}). The proof of Theorem \ref{thm:main} is based on Table \ref{tab:main}, which gives us many examples of pairs $(X,G)$.  Then we use McKay correspondence (Theorem \ref{thm:mckay}, formulas (\ref{eq:h(X,G)}), (\ref{eq:e(X,G)2}), Tables \ref{tab:g}, \ref{tab:pairs}) to compute these numbers. 

In general, Mirror symmetry predicts that Calabi-Yau threefolds  appear in pairs. For example, a natural mirror family of quintic Calabi-Yau threefolds is given by a crepant resolution $\widehat{X_{\lambda}/G}$ of the Gorenstein  quotient of 
$$X_{\lambda} = (x_1^5 + x_2^5 + x_3^5 + x_4^5 + x_5^5 - 5\lambda x_1x_2x_3x_4x_5 = 0)$$
by $G\cong\mu_5^3 \simeq \mu_5^4/\mu_5$. The  manifolds $\widehat{X_{\lambda}/G}$ ($\lambda^5 \not= 1$) are smooth Calabi-Yau threefolds of 
$$h^{1,1}(\widehat{X_{\lambda}/G}) = 101 = h^{1,2}(X_{\lambda})\,\, ,\,\, h^{1,2}(\widehat{X_{\lambda}/G}) = 1 = h^{1,1}(X_{\lambda})\,\, .$$ Note that the ``mirror partners'' of $27$ pairs in the list in Theorem \ref{thm:main} are in the same list and the mirror partners of the remaining $16$ pairs are not in the list but are already found by other methods (cf. \cite{Ju}).

Many Calabi-Yau threefolds (especially, those of small Picard ranks \cite{LOP13}, \cite{Og14}) can be acted on by finite groups. However, classification of such finite groups is still widely open. In general, if a finite group acts on an algebraic variety, the fixed loci of elements of the group or the relationship between the group and the quotient variety may be very useful to control the group (cf. \cite{Mu88}, \cite{Xi96}, \cite{HMX13}). We hope that the computational data (Tables \ref{tab:g}, \ref{tab:pairs}, \ref{tab:main}) in this note are helpful for future study of finite groups of automorphisms of Calabi-Yau threefolds. 

\medskip
 \noindent
{\bf Notations and conventions.}  We use the following notations to describe groups.\\[.2cm]If $A\in \GL(n,\mathbb{C})$, then we use $[A]$ denote the corresponding element in $\PGL(n, \mathbb{C})$. 

  $I_n:=$ the identity matrix of rank $n$;
  
  $\xi_k:=e^{\frac{2\pi i}{k}}$ a $k$-th primitive root of unity, where $k$ is a positive integer;
  
%   $\mathcal{S}(n):=$ the polynomial ring $\mathbb{C}[x_1,\cdots,x_n]$;
%   
%    $\mathcal{S}(n,d):=$ the set of homogeneous polynomials of degree $d$ in $\mathcal{S}(n)$.

    If $B_1,...,B_k$ are square matrices, then we use $\Diag(B_1,...,B_k)$ to denote the obvious block diagonal matrix.
    
     We use $\pi: \GL(n,\mathbb{C})\longrightarrow \PGL(n,\mathbb{C})$ to denote the natural quotient map.
     
     Let $G$ be a finite group and $p$ be a prime. If no confusion causes, we use $G_p$ to denote a Sylow $p$-subgroup of $G$.
     
    The following is the list of symbols of finite groups used in this article:
     
     $C_n$: a cyclic group of order $n$,
     
     $D_{2n}$: a dihedral group of order $2n$,
     
     $S_n(A_n)$: a symmetric (alternative) group of degree $n$,
     
     $Q_8$: a quaternion group of order 8.\\[.5cm]

\section{McKay correspondence and Hodge numbers of crepant resolutions}\label{ss:mckay}

Let $X$ be a Calabi-Yau threefold (i.e., a three dimensional complex smooth projective variety with trivial canonical bundle and vanishing first Betti number). Let $G$ be a finite {\it Gorenstein } (i.e., $G$ fixes a non zero holomorphic 3-form on $X$) subgroup of the automorphism group $\Aut(X)$ of $X$. The quotient variety $X/G$ has only Gorenstein singularities and it always has a crepant resolution (\cite[Theorem 1.2]{BKR01}), say $\widehat{X/G}$. The variety $\widehat{X/G}$ is again a Calabi-Yau threefold and its Hodge numbers can be computed by the classical McKay correspondences due to \cite{IR96}, \cite{BD96}. In order to discuss such computation in details, we would like to introduce some notations  first (mostly following \cite{BD96}). For any $g\in G$, we set $X^g:=\{x\in X \;| \;g(x)=x\}.$ Let $C(g):=\{h\in G \;| \;hg=gh\}$. Then the action of $C(g)$ on $X$ can be restricted on $X^g$. For any point $x\in X^g$, the eigenvalues of $g$ in the holomorphic tangent space $T_x$ are roots of unity:

$$e^{2\pi \sqrt{-1}\al_1},\;e^{2\pi \sqrt{-1}\al_2},\;e^{2\pi \sqrt{-1}\al_3}$$ where $0\leq \al_j <1\; (j=1,2,3)$ are locally constant functions on $X^g$ with values in $\Q$. The fixed locus $X^g$ is  always a disjoint union of finitely many {\it smooth} subvarieties of $X$ (cf.\cite[Lemma 4.1]{Do69} ). We write $X^g=X_1(g)\cup\cdots \cup X_{r_g}(g)$, where $X_1(g),...,X_{r_g}(g)$ are the smooth connected components of $X^g$. For each $i\in \{1,...,r_g\}$, the {\it fermion shift number} $F_i(g)$ is defined to be equal to the value of $\al_1+\al_2+\al_3$ on the connected component $X_i(g)$. Since $X$ is three dimensional, it follows that $F_i(g)\in \{0,1,2\}$. Notice that it could happen that some element in $C(g)$ permutes the connected components $X_i(g)$ (cf. the proof of Theorem \ref{thm:main}). For $j\in \{0,1,2\}$, we set $X_j^g:=\cup_{F_i(g)=j}X_{i}(g)$. We denote by $h^{p,\;q}_{C(g)}(X_j^g)$ the dimension of the subspace of $C(g)$-invariant elements in $H^{p,\;q}(X_j^g)$. We set
\begin{equation}\label{eq:hg}
  h_g^{p,\;q}(X,G):=h_{C(g)}^{p,\;q}(X_0^g)+h_{C(g)}^{p-1,\;q-1}(X_1^g)+h_{C(g)}^{p-2,\;q-2}(X_2^g).
\end{equation}

The {\it orbifold Hodge numbers} of $X/G$ are defined by 
\begin{equation}\label{eq:h(X,G)}
h^{p,\;q}(X,\;G):=\sum_{\{g\}}h_g^{p,\;q}(X,\;G)
\end{equation}
where $\{g\}$ runs over the conjugacy classes of $G$, so that $g$ represents $\{g\}$.

The {\it orbifold Euler number} of $X/G$ is defined by 
\begin{equation}\label{eq:e(X,G)}
e(X,\; G):=\frac{1}{|G|}\sum_{gh=hg}e(X^g\cap X^h)
\end{equation}
  or, equivalently,  
  \begin{equation}\label{eq:e(X,G)2}
  e(X,\; G):=\frac{1}{|G|}\sum_{\{g\}}\#(\{g\})\sum_{h\in C(g)} e(X^g\cap X^h),
  \end{equation}
    where $\#(\{g\})$ means the size of the conjugacy class represented by $g$.

\begin{theorem}\label{thm:mckay}
(\cite[Theorem 6.14 and Corollary 6.15]{BD96}) The Euler characteristic of  $\widehat{X/G}$ is equal to the orbifold Euler number of $X/G$, i.e., $$e(\widehat{X/G})=e(X,\; G).$$ Moreover, $$h^{p,\;q}(\widehat{X/G})=h^{p,\;q}(X,\;G).$$
\end{theorem}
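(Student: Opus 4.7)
The plan is to follow the original approach of Batyrev and Dais in \cite{BD96}, which proceeds via the theory of stringy $E$-functions together with a change-of-variables formula. For a variety $Y$ with at worst Gorenstein canonical singularities one defines the stringy $E$-function
$$E_{\text{st}}(Y; u,v) = \sum_{I \subseteq \{1,\ldots,r\}} E(D_I^\circ; u,v) \prod_{i \in I} \frac{uv-1}{(uv)^{a_i+1}-1},$$
using any log-resolution whose exceptional divisors $D_i$ have discrepancies $a_i$; a non-trivial fact is that the right-hand side is independent of the chosen log-resolution. If $\widehat{X/G} \to X/G$ happens to be crepant then all $a_i = 0$ and the sum collapses to the ordinary Hodge-Deligne polynomial $E(\widehat{X/G}; u,v)$. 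So the theorem will follow once one proves the identity
$$E_{\text{st}}(X/G; u,v) = \sum_{\{g\}}\sum_{j=0}^{2}(uv)^{j}\,E\bigl(X_j^{g};u,v\bigr)^{C(g)},$$
where the superscript $C(g)$ denotes the invariant part of the cohomology; reading off the coefficient of $u^{p}v^{q}$ then recovers (\ref{eq:h(X,G)}), and setting $u=v=1$ recovers (\ref{eq:e(X,G)}).

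To establish this identity, I would work on the arc space of $X/G$ and use the fact that the Galois cover $X \to X/G$ lets one decompose motivic (or $p$-adic) integrals according to the conjugacy classes of $G$. Each class $\{g\}$ contributes an integral localized on the jets through $X^g$; the Jacobian of the cover, expressed through the eigenvalues $\exp(2\pi \sqrt{-1}\,\al_k)$ of $g$ on the tangent space, produces exactly the shift $(uv)^{F_i(g)}$ because $F_i(g)=\al_1+\al_2+\al_3$. Passing from $X^g$ to $X^g/C(g)$ (so that $G$-orbits are counted once) introduces the $C(g)$-invariant part, and the decomposition $X^g = X_0^g \sqcup X_1^g \sqcup X_2^g$ by value of the age $F_i(g)$ gives the three pieces of (\ref{eq:hg}). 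The Euler-number identity is then either a specialization at $u=v=1$ or, alternatively, a direct consequence of the orbifold Lefschetz formula for the action of $C(g)$ on $X^g$, which is precisely the content of (\ref{eq:e(X,G)2}).

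The main obstacle is exactly this middle step: setting up motivic integration on the Deligne-Mumford stack $[X/G]$ (or, as in \cite{BD96}, the $p$-adic integration with respect to a suitable canonical measure), proving resolution-independence of $E_{\text{st}}$, and carrying out the twisted-sector computation of the Jacobian factors. Everything else is essentially formal: the Gorenstein hypothesis on $G$ guarantees $F_i(g)\in\Z$ so that the age shifts land in $\Z[u,v]$ and the discrepancies $a_i$ vanish for any crepant resolution; three-dimensionality restricts the age to $\{0,1,2\}$, matching the three summands in (\ref{eq:hg}); and the invariant-part statement is the standard fact that $C(g)$ is the natural symmetry group of the $g$-twisted sector. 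Since the theorem itself is only quoted from \cite{BD96} and used as a black box in the rest of the paper, in practice one only needs to verify that the hypotheses (smooth quintic threefold, finite Gorenstein group action, crepant resolution guaranteed by \cite{BKR01}) fit the framework of \cite{BD96}, which they do verbatim.
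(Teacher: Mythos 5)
The paper does not prove this statement at all: it is quoted verbatim from \cite{BD96} (Theorem~6.14 and Corollary~6.15) and used as a black box, so there is no internal proof to compare yours against. Your closing remark --- that in this context one only needs to check that a smooth quintic threefold with a finite Gorenstein action and the crepant resolution supplied by \cite{BKR01} fall under the hypotheses of \cite{BD96} --- is exactly the role the theorem plays in the paper, and that check is correct. Your sketch of the underlying proof is a fair account of the Batyrev-school argument, but with one historical/technical caveat: the route through arc spaces, motivic (or $p$-adic) integration and resolution-independence of $E_{\mathrm{st}}$ is the \emph{later} proof of the strong McKay correspondence (Batyrev 1998--99, Denef--Loeser), not what is in \cite{BD96} itself, which predates motivic integration. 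In the threefold case relevant here, \cite{BD96} instead combines the definition of string-theoretic Hodge numbers with the Ito--Reid correspondence \cite{IR96} between crepant exceptional divisors over Gorenstein quotient threefold singularities and ``junior'' (age-one) conjugacy classes, together with a local toric computation for the abelian stabilizers. So if you intend your outline as a reconstruction of the cited proof, you should either switch to the Ito--Reid mechanism or explicitly invoke the later motivic-integration proofs as the source; as written, the ``main obstacle'' you identify is the content of papers other than the one being cited. None of this affects the paper, which only needs the statement itself.
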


%
%\begin{proposition}\label{pp:formula}
%The Hodge numbers $h^{1,\;1}(\widehat{X/G})$ and $h^{2,\;1}(\widehat{X/G})$ are computed by $$h^{1,\;1}(\widehat{X/G})=h_{G}^{1,\;1}(X)+\sum_{\{g\}\neq \{1_G\}}h_{C(g)}^{0,\;0}(X_1^g) \;$$ and $$h^{2,\;1}(\widehat{X/G})=(h_{G}^{1,\;1}(X)+\sum_{\{g\}\neq \{1_G\}}h_{C(g)}^{0,\;0}(X_1^g))-\frac{1}{2}e(X,G)$$ where $h_{G}^{1,\;1}(X)$ is the dimension of the subspace of $G$-invariant elements in $H^{1,\;1}(X)$ and $1_G$ is the  identity element of $G$.
%\end{proposition}
%
%\begin{proof}
%Let $g\in G$. If $g=1_G$, then $X=X^g=X_0^g$ and $h_g^{1,\;1}(X,G)=h_{G}^{1,\;1}(X)$. If $g\neq 1_G$, then $X_0^g=\emptyset$ and $h_g^{1,\;1}(X,G)=h_{C(g)}^{0,\;0}(X_1^g)$. Then this proposition is just an immediate consequence of Theorem \ref{thm:mckay}.
%
%
%\end{proof}

The fundamental group of $\widehat{X/G}$ is well-understood (at least, when $\pi_1(X)$ is trivial):

\begin{theorem}\label{thm:pi1}
Suppose $X$ is simply connected. Let $H$ be the normal subgroup of $G$ generated by those elements which have fixed points. Then $$\pi_1(\widehat{X/G})\cong \pi_1(X/G)\cong G/H.$$
\end{theorem}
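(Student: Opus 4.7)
The plan is to prove the two isomorphisms separately. The isomorphism $\pi_1(X/G)\cong G/H$ is a direct application of Armstrong's classical theorem on the fundamental group of an orbit space: since the finite group $G$ acts properly discontinuously and holomorphically on the simply connected complex manifold $X$, and $H$ is defined as the normal subgroup generated by elements of $G$ having non-empty fixed locus, the theorem yields $\pi_1(X/G)\cong G/H$. A self-contained argument factors the quotient as $X\to X/H\to X/G$: the first map is branched only along the (complex-analytic, hence real codimension at least two) fixed loci of elements of $H$, so simple connectedness of $X$ gives $\pi_1(X/H)=1$; the second map is then an unramified Galois cover with deck group $G/H$ by the very definition of $H$, yielding the claim.

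For the identification $\pi_1(\widehat{X/G})\cong\pi_1(X/G)$, I would exploit the fact that $Y:=X/G$ has only finite quotient singularities. The standard principle at work is that for a normal complex projective variety $Y$ with only finite quotient singularities, any resolution $\pi:\widetilde{Y}\to Y$ induces an isomorphism on topological fundamental groups. Writing $Y^{sm}$ for the smooth locus of $Y$ and $E=\pi^{-1}(\mathrm{Sing}(Y))$ for the exceptional locus, one has $\widehat{X/G}\setminus E\cong Y^{sm}$; both inclusions $Y^{sm}\hookrightarrow Y$ and $\widehat{X/G}\setminus E\hookrightarrow \widehat{X/G}$ induce surjections on $\pi_1$, whose kernels must then be compared.

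The main obstacle is precisely this comparison of kernels, done locally at each quotient singularity $(\C^n/\Gamma,0)$: one must show that the meridian loops around the singular stratum in $Y$ and the meridians around the exceptional fiber in $\widetilde{Y}$ generate the same subgroup of $\pi_1(Y^{sm})$. The link of such a singularity is $S^{2n-1}/\Gamma$, its fundamental group is $\Gamma$ modulo its pseudo-reflection subgroup, and both sets of meridians generate exactly this subgroup. Carrying this out in full is standard but technical; the most efficient route is to cite Koll\'ar's theorem on invariance of fundamental groups under resolutions of varieties with log terminal (in particular, quotient) singularities, which applies here since quotient singularities are canonical and hence klt.
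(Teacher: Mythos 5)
Your proof is correct and follows essentially the same route as the paper, which simply cites Armstrong's theorem for $\pi_1(X/G)\cong G/H$ and Koll\'ar's theorem on the invariance of $\pi_1$ under resolutions of quotient (hence klt) singularities for $\pi_1(\widehat{X/G})\cong\pi_1(X/G)$. The extra local sketches you offer are supplementary and not needed once those two results are invoked.
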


\begin{proof}
By \cite{Ar68}, $\pi_1(X/G)\cong G/H$. By \cite[Theorem 7.8]{Ko93}, $\pi_1(\widehat{X/G})\cong \pi_1(X/G)$. Therefore, the theorem is proved.
\end{proof}

\section{Gorenstein actions on quintics and new Calabi-Yau threefolds}

Let $X\subset\P^4$ be a smooth quintic threefold defined by a degree five homogeneous polynomial $F=F(x_1,...,x_5)$. By a result of Matsumura-Monsky (\cite{MM63}), 
$${\rm Aut}\, (X) = \{\varphi \in \PGL(5, \C) | \varphi(X) = X\}\,\, ,$$
 ${\rm Aut}\, (X)$ is always a {\it finite} group. Let $G< \Aut(X)$ be a Gorenstein subgroup. As abstract groups, both $\Aut(X)$ and $G$ are essentially classified by \cite[Theorems 2.2 and 10.4]{OY15}. Like in Section \ref{ss:mckay}, we denote by $\widehat{X/G}$  a crepant resolution of the quotient variety $X/G$. In this section, we will compute the Hodge numbers and the fundamental group of $\widehat{X/G}$ for many pairs $(X,G)$.

   \begin{notation} 
   Let $A=(a_{ij})\in \GL(5,\mathbb{C})$. We denote by $A(F)$ the homogeneous polynomial $$F(\sum_{i=1}^{5}a_{1i}x_i,\cdots,\sum_{i=1}^{5}a_{5i}x_i)\, .$$
   
   Thus $[A]\in \Aut(X)$ if and only if $A(F)=\lambda F$, for some $\lambda\in \C^*$.
   \end{notation}

\begin{lemma}\label{lem:mukailemma}
Let $A\in \GL(5,\C)$. Suppose $[A]\in \Aut(X)$. Then the automorphism $[A]$ of $X$ is Gorenstein if and only if $A(F)=\Det(A) F$.
\end{lemma}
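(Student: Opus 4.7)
The plan is to write the holomorphic 3-form on $X$ explicitly via a Poincaré residue and track how $A$ acts on it. Recall that on $\P^4$ with homogeneous coordinates $x_1,\ldots,x_5$, the meromorphic top form
$$\Omega = \sum_{i=1}^{5}(-1)^{i-1} x_i\, dx_1\wedge\cdots\wedge \widehat{dx_i}\wedge\cdots\wedge dx_5$$
has a simple pole along $X$, and the Poincar\'e residue of $\Omega/F$ along $X=(F=0)$ is (up to a nonzero scalar) a nowhere vanishing holomorphic 3-form $\omega_X$ on $X$, which spans $H^0(X,\Omega_X^3)$.

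First I would record the transformation rule for $\Omega$ under a linear change of coordinates: if $A=(a_{ij})\in \GL(5,\C)$ acts on $\C^5$ by $A\colon (x_i)\mapsto (\sum_j a_{ij}x_j)$, then $A^*(dx_1\wedge\cdots\wedge dx_5)=\Det(A)\,dx_1\wedge\cdots\wedge dx_5$, and the standard computation (contraction with the Euler vector field $\sum_i x_i\partial_{x_i}$, which is $A$-invariant up to scaling) gives $A^*\Omega=\Det(A)\,\Omega$. Combined with the definition $A^*F=A(F)$, this yields
$$A^*\!\left(\frac{\Omega}{F}\right)=\frac{\Det(A)\,\Omega}{A(F)}.$$

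Next, since $[A]\in\Aut(X)$, there is some $\lambda\in\C^*$ with $A(F)=\lambda F$. Taking the Poincar\'e residue (which is natural with respect to automorphisms of the ambient space preserving $X$) of both sides, we obtain
$$[A]^*\omega_X=\frac{\Det(A)}{\lambda}\,\omega_X.$$
Therefore $[A]$ fixes $\omega_X$ (equivalently, $[A]$ is Gorenstein) if and only if $\lambda=\Det(A)$, i.e.\ $A(F)=\Det(A)F$, which is precisely the claimed equivalence.

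The only nontrivial step is the transformation law $A^*\Omega=\Det(A)\,\Omega$; it is a routine linear-algebra check but is the crux of the argument. Everything else is a direct consequence of the residue description of $\omega_X$ and the definition of a Gorenstein automorphism.
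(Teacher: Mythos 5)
Your argument is correct. The paper itself offers no proof here, only the citation to \cite[Lemma 2.1]{Mu88}, and your Poincar\'e residue computation is exactly the standard argument behind that reference: $\Omega=\iota_E(dx_1\wedge\cdots\wedge dx_5)$ with $E$ the Euler field, the invariance of $E$ under linear maps gives $A^*\Omega=\Det(A)\,\Omega$, and naturality of the residue then yields $[A]^*\omega_X=(\Det(A)/\lambda)\,\omega_X$; since $h^{3,0}(X)=1$, fixing a nonzero holomorphic $3$-form is equivalent to this scalar being $1$. All steps check out, so your write-up simply supplies the proof the paper delegates to the literature.
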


\begin{proof}
See, for instance, \cite[Lemma~2.1]{Mu88}.
\end{proof}

 \begin{definition}
  (1)  We say a subgroup $\widetilde{G}< \GL(5,\mathbb{C})$ is a {\it lifting} of $G$ if $\widetilde{G}$ and $G$ are isomorphic via the natural projection $\pi: \GL(5,\C)\rightarrow \PGL(5,\C).$ We call $G$ {\it liftable} if $G$ admits a lifting.
 
 (2) We say $G$ is $F$-{\it liftable} if the following two conditions are satisfied:
  
  1) $G$ admits a lifting $\widetilde{G}< \GL(5,\mathbb{C})$; and
  
  2)  $A(F)=F$, for all $A$ in $\widetilde{G}$.

  In this case, we say $\widetilde{G}$ is an $F$-{\it lifting} of $G$.
  
 (3)  Let $h$ be an element in $G$. As a special case, we say $H\in \GL(5,\mathbb{C})$ is an $F$-{\it lifting} of $h$ if $\pi (H)=h$ and the group $\langle H\rangle $ is an $F$-lifting of the group $\langle h\rangle .$ 
  \end{definition}
  
 \begin{example}
   Let $F=x_1^5+x_2^5+x_3^5+x_4^5+x_5^5$  and let $G$ be the subgroup of $\PGL(5,\mathbb{C})$ generated by $[A_1]$ and $[A_2]$, where $$A_1=
    \begin{pmatrix} 
    0&1&0&0&0 \\ 
    0&0&1&0&0 \\ 
    0&0&0&1&0 \\ 
    0&0&0&0&1 \\ 
    1&0&0&0&0 \\ 
    \end{pmatrix},\,\, A_2=
    \begin{pmatrix} 
    1&0&0&0&0 \\ 
    0&\xi_5&0&0&0 \\ 
    0&0&\xi_5^2&0&0 \\ 
    0&0&0&\xi_5^3&0 \\ 
    0&0&0&0&\xi_5^4 \\ 
    \end{pmatrix}.$$ Then  $G$ is clearly a Gorenstein subgroup of $\Aut(X)$ and $G$ is not $F$-liftable (in fact, $G$ is not liftable).

\end{example}
    
\begin{theorem}\label{thm:lift}
(\cite[Theorems 4.8, 4.9, and 8.3]{OY15}) The group $G$ is $F$-liftable if and only if a Sylow $5$-subgroup $G_5$ is $F$-liftable. Moreover, $G$ is $F$-liftable if all the following three conditions are satisfied:

(1) $G$ is a finite solvable groups with  $|G|\leq 2000$,

(2) $|G|$ divides $2^63^25^2$, and

(3) $|G_2|>1$.
\end{theorem}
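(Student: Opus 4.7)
The plan is to recast $F$-liftability as the splitting of a canonical central extension by $\mu_5$, reduce the splitting obstruction to the Sylow $5$-subgroup via standard transfer-restriction, and then handle the ``moreover'' part by a case analysis on $G_5$.

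Set $\widetilde G_F := \{A \in \GL(5,\C) : A(F) = F\}$. By Lemma~\ref{lem:mukailemma}, any representative $A$ of a Gorenstein element $[A] \in G$ satisfies $A(F) = \det(A)\,F$, so a suitable fifth-root rescaling of $A$ lands in $\widetilde G_F$. Hence the preimage $E := \pi^{-1}(G) \cap \widetilde G_F$ is a subgroup that surjects onto $G$ with kernel $\mu_5 = \widetilde G_F \cap \ker \pi$, giving a central extension
$$1 \longrightarrow \mu_5 \longrightarrow E \longrightarrow G \longrightarrow 1$$
whose class $\alpha_G \in H^2(G,\mu_5)$ vanishes exactly when $G$ is $F$-liftable. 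Since $H^2(G,\mu_5)$ is $5$-primary and $[G:G_5]$ is coprime to $5$, the identity $\Tra \circ \Res = [G:G_5]$ forces the restriction map $\Res : H^2(G,\mu_5) \hookrightarrow H^2(G_5,\mu_5)$ to be injective; as $\Res(\alpha_G) = \alpha_{G_5}$, this yields the first ``if and only if''.

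For the second assertion, after the reduction we need only verify $F$-liftability of $G_5$ under hypotheses (1)--(3). Conditions (1) and (2) force $|G_5| \in \{1,5,25\}$. In the cyclic cases, diagonalizing an order-$5^k$ generator $h = [A]$ as $A = \Diag(\xi_5^{a_1},\ldots,\xi_5^{a_5})$ (after a rescaling so that $A^{5^k}=I$) reduces $F$-liftability of $\langle h \rangle$ to the congruence $\sum a_i \equiv 0 \pmod 5$, which is well-defined on $h$ itself since rescaling by a fifth root of unity shifts the sum by a multiple of $5$. In the elementary-abelian case $G_5 \cong C_5 \times C_5$ the group $H^2(C_5 \times C_5,\mu_5)$ is nontrivial, so even when both generators are individually $F$-liftable one must further check that their $F$-lifts commute; the example preceding the theorem shows this can fail.

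The main obstacle is exactly this last case-by-case verification: to show that solvability together with $|G| \leq 2000$, $|G|$ dividing $2^6 3^2 5^2$, and $|G_2| > 1$ simultaneously kill the eigenvalue obstruction and the commutator obstruction. My approach would combine the OY15 classification of Gorenstein subgroups $G < \Aut(X)$ with a direct enumeration of the finitely many conjugacy types in $\PGL(5,\C)$ meeting the hypotheses, listing the eigenvalues of every Sylow-$5$ element and computing the commutators of the chosen lifts in the $C_5 \times C_5$-case. The hypothesis $|G_2|>1$ is presumably what excludes the Fermat-type $\mu_5^2$-actions whose minimal lift is the Heisenberg group; I would expect this to follow from showing that an involution in $G$ intertwining the two $C_5$-factors is incompatible with a nontrivial Heisenberg commutator, perhaps by an averaging argument on the fifth roots of unity. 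Granting this explicit bookkeeping, the theorem follows from the cohomological reductions above.
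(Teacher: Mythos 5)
First, note that the paper itself gives no proof of this statement: it is quoted wholesale from \cite[Theorems 4.8, 4.9, and 8.3]{OY15}, so there is no internal argument to compare yours against. Judged on its own terms, the first half of your proposal is correct and complete. The central extension $1\to\mu_5\to E\to G\to 1$ with $E=\pi^{-1}(G)\cap\widetilde G_F$ is well defined (surjectivity onto $G$ follows from Lemma~\ref{lem:mukailemma} by rescaling a representative $A$ by a fifth root of $\Det(A)^{-1}$, and the kernel is exactly the scalar subgroup $\mu_5$), $F$-liftability is equivalent to the vanishing of the extension class in $H^2(G,\mu_5)$, and since that group is killed by $5$ while $\Tra\circ\Res=[G:G_5]$ is prime to $5$, restriction to $G_5$ is injective. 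This is the standard reduction and is almost certainly the mechanism behind the first assertion in \cite{OY15}.

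The second assertion, however, is not proved in your proposal; it is only reduced to a finite verification that you do not carry out, and that verification is the actual content of the theorem. Two specific things are missing. (i) Even in the cyclic case your congruence $\sum a_i\equiv 0\pmod 5$ is merely a reformulation of what must be shown: one has to prove that every order-$5$ Gorenstein element of $\Aut(X)$, for $X$ a \emph{smooth} quintic, satisfies it. That requires classifying the admissible eigenvalue patterns --- for instance $A=\Diag(1,1,1,1,\xi_5)$ is excluded because $\Det(A)=\xi_5$ would force $F$ to be divisible by $x_5$, contradicting irreducibility of $X$ --- and this classification is exactly \cite[Lemma 4.13]{OY15}, which the present paper also invokes without proof in Lemma~\ref{lem:c5}. (ii) The crux of the ``moreover'' part is why solvability, the order constraints, and $|G_2|>1$ together kill the Heisenberg obstruction when $G_5\cong C_5\times C_5$; your ``averaging argument on the fifth roots of unity'' is a conjecture, not an argument, and nothing in the proposal actually connects the existence of a $2$-element of $G$ to the vanishing of the commutator $[\wt A,\wt B]\in\mu_5$ of chosen lifts. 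Until (i) and (ii) are supplied, the proposal establishes only the first sentence of the theorem.
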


\begin{lemma}\label{lem:c2}
Let $g\in G$. Suppose $g$ is of order $2$. Then $h^{1,\;1}_g(X,G)=2$ and $e(X^g)=-8$.
\end{lemma}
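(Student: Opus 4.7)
First I would lift $g$ to a matrix $A \in \GL(5, \C)$ with $\pi(A) = g$. Since $g^2 = 1$ in $\PGL(5, \C)$, $A^2$ is a scalar matrix, and after rescaling I may assume $A^2 = I_5$. Diagonalizing and changing coordinates, $A = \Diag(-1, \ldots, -1, 1, \ldots, 1)$ with exactly $k$ eigenvalues equal to $-1$ for some $0 \leq k \leq 5$. The Gorenstein condition of Lemma \ref{lem:mukailemma} becomes $A(F) = (-1)^k F$, which says that the parity of $a_1 + \cdots + a_k$ is constant over all monomials of $F$. I next rule out $k \in \{1, 4\}$: for $k = 1$ the condition forces every monomial of $F$ to be divisible by $x_1$, so $F|_{x_1 = 0} \equiv 0$ and $\P^3 \subset X$, which is impossible; the case $k = 4$ is symmetric under $A \mapsto -A$. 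Since $k \in \{0, 5\}$ yields the identity and $[A] = [-A]$ swaps $k$ and $5-k$, I may take $A = \Diag(-1, -1, 1, 1, 1)$.

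Next I would analyze the two components of $X^g$. The fixed locus of $[A]$ in $\P^4$ is the disjoint union of the projectivized eigenspaces $L_1 := \{x_1 = x_2 = 0\} \cong \P^2$ and $L_2 := \{x_3 = x_4 = x_5 = 0\} \cong \P^1$. Restricting $F$ to $L_2$ kills every monomial (since $a_1 + a_2 = 5$ is odd there), so $F|_{L_2} \equiv 0$ and $L_2 \subset X$. On the other hand, if $F_0 := F|_{L_1}$ were identically zero then the Gorenstein parity constraint would place $F$ in the ideal $(x_1, x_2)^2$, making every partial derivative of $F$ vanish along $L_1$ and contradicting smoothness of $X$; hence $C := X \cap L_1 = V(F_0)$ is a plane quintic curve in $L_1 \cong \P^2$, smooth because $X^g$ is smooth by \cite[Lemma 4.1]{Do69}. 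Thus $e(X^g) = e(C) + e(L_2) = (2 - 2 \cdot 6) + 2 = -8$, which is the second assertion.

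For the fermion shifts I compute the $g$-eigenvalues on $T_x X$ at every $x \in X^g$. Using $T_x \P^4 \cong \Hom(\C x, \C^5/\C x)$, $g$ acts on $T_x \P^4$ with weights $(1, 1, -1, -1)$ if $x \in L_1$ (the $(+1)$-weight space being $T_x L_1$) and with weights $(1, -1, -1, -1)$ if $x \in L_2$ (the $(+1)$-weight being $T_x L_2$). At $x \in C$, transversality of $X \cap L_1$ forces $T_x X \cap T_x L_1 = T_x C$ to be $1$-dimensional and, by counting codimensions, $T_x X$ to contain the full $(-1)$-eigenspace, so $g|_{T_x X}$ has eigenvalues $(1, -1, -1)$. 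At $x \in L_2 \subset X$, $T_x L_2$ provides the $(+1)$-weight and the remaining $2$-plane in $T_x X$ lies in the $(-1)$-eigenspace of $T_x \P^4$, again yielding $(1, -1, -1)$. So the fermion shift is $0 + \tfrac{1}{2} + \tfrac{1}{2} = 1$ on every component, i.e., $X^g = X^g_1$ and $X^g_0 = X^g_2 = \emptyset$.

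Finally, formula (\ref{eq:hg}) gives $h^{1,1}_g(X, G) = h^{0,0}_{C(g)}(X^g_1) = h^{0,0}_{C(g)}(C \sqcup L_2)$. Because $C$ has genus $6$ while $L_2 \cong \P^1$ has genus $0$, no element of $C(g)$ can interchange the two components, so each connected component contributes one $C(g)$-invariant line to $H^{0,0}$, yielding $h^{1,1}_g(X, G) = 2$ as claimed. The most delicate step is the fermion-shift computation: it requires carefully tracking the $g$-action on $T_x \P^4$ via the eigenspace decomposition of $\C^5$ and using smoothness of $X$ together with smoothness of $X^g$ to pin down how $T_x X$ sits inside $T_x \P^4$.
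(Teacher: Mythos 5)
Your proof is correct and follows essentially the same route as the paper: reduce to $A=\Diag(-1,-1,1,1,1)$, identify $X^g$ as a smooth plane quintic plus a line, compute $e(X^g)=-10+2=-8$, check that both fermion shifts equal $1$, and note that $C(g)$ cannot swap the two non-isomorphic components. The only difference is that you re-derive from scratch what the paper imports from \cite{OY15} (the normal form of an order-two Gorenstein lift and the fact that $F\notin(x_1,x_2)$), and you spell out the tangent-space eigenvalue computation in more detail; both versions are sound.
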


\begin{proof}
By Theorem \ref{thm:lift}, $g$ has an $F$-lifting, say $A$. Then $\Ord(A)=2$ and $A(F)=F$. By Lemma \ref{lem:mukailemma}, $\Det(A)=1$. By \cite[Lemma 5.3]{OY15}, we may assume $A=\Diag(-1,-1,1,1,1)$. By $A(F)=F$ we have $F\in (x_3,x_4,x_5)$ (i.e., $F$ is in the ideal of $\C[x_1,...,x_5]$  generated by $x_3,x_4,x_5$). By \cite[Proposition 3.4]{OY15} $F\notin (x_1,x_2)$. Therefore, the fixed locus $X^g=X_1(g)\cup X_2(g)$, where $X_1(g): x_1=x_2=F=0$ is a smooth plane curve of degree $5$ and genus $6$, and $X_2(g): x_3=x_4=x_5=F=0$ is a line in $\P^4$. Thus $e(X^g)=e(X_1(g))+e(X_2(g))=-10+2=-8.$

For any point $x\in X_1(g)$, the eigenvalues of $g$ in the holomorphic tangent space $T_x$ are $$-1,-1,1$$ Thus $F_1(g)=1$. Similarly, $F_2(g)=1$. Then, by definition,  $X_0^g=X_2^g=\emptyset$, $X_1^g=X_1(g)\cup X_2(g)$. For any $h\in C(g)$, $h(X_i(g))=X_i(g),i=1,2$ since $X_1(g)$ and $X_2(g)$ are not isomorphic to each other. So $H_{C(g)}^{0,\;0}(X_1^g)=H^0(X_1^g)$. Therefore, $h_g^{1,\;1}(X,\; G)=h_{C(g)}^{0,\;0}(X_1^g)=2$.

\end{proof}

\begin{lemma}\label{lem:c3}
Let $g\in G$. Suppose $g$ is of order $3$. Then $g$ has an $F$-lifting, say $A$. Moreover, exactly one of the following two cases happens:

(1) The trace of $A$ is $2$ (i.e., $\Tr(A)$=2), $e(X^g)=-8$, $h^{1,\;1}_g(X,\;G)=2$, or

(2) $\Tr(A)=-1$, $e(X^g)=4$, $h^{1,\;1}_g(X,\;G)=2$.

\end{lemma}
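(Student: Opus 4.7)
The plan is to parallel the proof of Lemma~\ref{lem:c2}. First, applying Theorem~\ref{thm:lift} to the cyclic group $\langle g\rangle$ (whose Sylow $5$-subgroup is trivial, hence trivially $F$-liftable) produces an $F$-lift $A\in\GL(5,\C)$ of $g$ of order $3$. Lemma~\ref{lem:mukailemma} forces $\Det(A) = 1$, and since $A$ has finite order it is diagonalisable with eigenvalues in $\{1,\xi_3,\xi_3^2\}$ whose product is $1$. After a linear change of coordinates on $\P^4$ I may assume $A$ is diagonal; let $(a,b,c)$ denote the multiplicities of $(1,\xi_3,\xi_3^2)$ on the diagonal.

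The constraints $a+b+c = 5$, $b + 2c \equiv 0 \pmod 3$, and $A\neq I_5$ leave exactly six possibilities: $(3,1,1)$, $(1,2,2)$, $(2,3,0)$, $(2,0,3)$, $(0,4,1)$, $(0,1,4)$. The main obstacle is ruling out the last four via smoothness of $X$. In each case, the invariance $A(F) = F$ combined with the weight pattern forces $F$ to be concentrated in a small monomial subspace: for the profiles $(0,4,1)$ and $(0,1,4)$ some coordinate divides $F$, making $X$ reducible; for $(2,3,0)$ and $(2,0,3)$ every monomial of $F$ lies in the ideal $(x_1,x_2)$, so the plane $\P^2 = \{x_1 = x_2 = 0\}$ sits inside $X$ and a direct computation shows that all five first partials of $F$ vanish identically on this plane, producing a $2$-dimensional singular locus on $X$. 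This leaves only $(3,1,1)$ (giving $\Tr(A) = 2$) and $(1,2,2)$ (giving $\Tr(A) = -1$).

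For case (1), I normalise $A = \Diag(1,1,1,\xi_3,\xi_3^2)$. The $\P^4$-fixed locus is the $2$-plane $\{x_4 = x_5 = 0\}$ together with the two isolated points $p_1 = [0{:}0{:}0{:}1{:}0]$ and $p_2 = [0{:}0{:}0{:}0{:}1]$. Intersecting with $X$ gives $X^g = C\sqcup\{p_1\}\sqcup\{p_2\}$, where $C$ is a smooth plane quintic of genus $6$; both $p_i$ lie on $X$ automatically (the monomials $x_4^5, x_5^5$ have nonzero weight), and smoothness of $X$ at $p_i$ forces the coefficients of $x_4^4 x_5$ and $x_4 x_5^4$ in $F$ to be non-zero. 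Hence $e(X^g) = -10 + 1 + 1 = -8$. A direct tangent-space eigenvalue calculation then shows that the fermion shift equals $1$ on $C$ and on one of the isolated points, and equals $2$ on the other. Since the two components of $X_1^g$ then have different dimensions, $C(g)$ cannot permute them, so $h_g^{1,1}(X,G) = h_{C(g)}^{0,0}(X_1^g) = 2$.

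For case (2), normalise $A = \Diag(1,\xi_3,\xi_3,\xi_3^2,\xi_3^2)$. The $\P^4$-fixed locus is $\{p\}\sqcup\ell_1\sqcup\ell_2$, where $p = [1{:}0{:}0{:}0{:}0]$ corresponds to the $1$-eigenspace and $\ell_1, \ell_2$ are the projectivisations of the two $2$-dimensional eigenspaces. A monomial-weight count shows that no $A$-invariant degree-$5$ monomial is supported on $\ell_i$, so $\ell_1, \ell_2 \subset X$; a similar weight argument shows that if $p$ were on $X$ then all five first partials of $F$ would vanish at $p$, contradicting smoothness. Hence $X^g = \ell_1 \sqcup \ell_2$ and $e(X^g) = 4$, and a tangent-space computation gives fermion shift $1$ on each line. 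The remaining subtle point is that $C(g)$ cannot swap $\ell_1$ with $\ell_2$: any lift $H\in\GL(5,\C)$ of an element of $C(g)$ satisfies $HAH^{-1} = cA$ for some $c\in\C^*$, and comparing the multiplicity profile $(1,2,2)$ of $A$ with those of $\xi_3 A$ (profile $(2,1,2)$) and $\xi_3^2 A$ (profile $(2,2,1)$) shows that only $c=1$ is possible. So $H$ commutes with $A$ and preserves each eigenspace, each $\ell_i$ is $C(g)$-invariant, and $h_{C(g)}^{0,0}(\ell_1 \sqcup \ell_2) = 2$, giving $h_g^{1,1}(X,G) = 2$.
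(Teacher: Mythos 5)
Your proof is correct and follows the same skeleton as the paper's (produce an $F$-lifting via Theorem \ref{thm:lift} and Lemma \ref{lem:mukailemma}, diagonalise, read off the fixed locus, and compute the fermion shifts), but it is more self-contained at the two places where the paper leans on \cite{OY15}. Where the paper simply cites \cite[Lemma~5.4]{OY15} to reduce to the two normal forms $\Diag(\xi_3,\xi_3^2,1,1,1)$ and $\Diag(\xi_3,\xi_3^2,\xi_3,\xi_3^2,1)$, you enumerate all six eigenvalue-multiplicity profiles compatible with $\Det(A)=1$ and eliminate $(0,4,1),(0,1,4)$ (a coordinate divides $F$) and $(2,3,0),(2,0,3)$ (there $F\in(x_1,x_2)^2$, so $X$ is singular along a plane) using smoothness of $X$ --- which is exactly the content of the cited lemma. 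Likewise, in case (2) you rederive the fact that the isolated fixed point lies off $X$ (the paper's appeal to \cite[Proposition~3.3]{OY15}) by observing that otherwise all five partials of $F$ would vanish there. Your $HAH^{-1}=cA$ eigenvalue-profile argument showing $C(g)$ preserves each of the two lines is a clean, explicit version of the paper's parenthetical remark about tangent-space eigenvalues. The paper's route is shorter; yours makes the lemma independent of the external normal-form results, and all the individual verifications (weights, partial derivatives, fermion shifts) check out.
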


\begin{proof}
By Theorem \ref{thm:lift}, an $F$-lifting $A$ exists. Then $\Ord(A)=3$ and $A(F)=F$. By Lemma \ref{lem:mukailemma}, $\Det(A)=1$. By \cite[Lemma 5.4]{OY15}, we may assume $A=\Diag(\xi_3,\xi_3^2,1,1,1)$ or $\Diag(\xi_3,\xi_3^2,\xi_3,\xi_3^2,1)$. 

Case (1) $A=\Diag(\xi_3,\xi_3^2,1,1,1)$. Then obviously $\Tr(A)=2$. By $A(F)=F$ we have $x_1^5,x_2^5\notin F$. Thus the fixed locus $X^g=X_1(g)\cup X_2(g)\cup X_3(g)$, where $X_1(g): x_1=x_2=F=0$ a smooth plane curve of degree $5$ and genus $6$, $X_2(g)=\{(0:1:0:0:0)\}$ a point, and $X_3(g)=\{(1:0:0:0:0)\}$ also a point. Thus $e(X^g)=e(X_1(g))+e(X_2(g))+e(X_3(g))=-10+1+1=-8.$  For any point $x\in X_1(g)$, the eigenvalues of $g$ in the holomorphic tangent space $T_x$ are $$\xi_3,\;\xi_3^2,\;1.$$ Thus $F_1(g)=1$. For $x=(0:1:0:0:0)\in X_2(g)$, the eigenvalues of $g$ in the holomorphic tangent space $T_x$ are $$\xi_3,\;\xi_3,\;\xi_3.$$ Thus $F_2(g)=1$.  For $x=(1:0:0:0:0)\in X_3(g)$, the eigenvalues of $g$ in the holomorphic tangent space $T_x$ are $$\xi_3^2,\;\xi_3^2,\;\xi_3^2.$$ Thus $F_3(g)=2$ Then, by definition,  $X_0^g=\emptyset$,  $X_1^g=X_1(g)\cup X_2(g)$, and  $X_2^g=X_3(g)$. For any $h\in C(g)$, $h(X_1(g))=X_1(g),$ since $X_1(g)$ is  isomorphic to neither  $X_2(g)$ nor $X_3(g)$. Moreover, $h(X_i(g))=X_i(g)$, $i=2,3$ since $F_2(g)$ and $F_3(g)$ are different. So $H_{C(g)}^{0,\;0}(X_1^g)=H^0(X_1^g)$. Therefore, $h_g^{1,\;1}(X,\; G)=h_{C(g)}^{0,\;0}(X_1^g)=2$.

Case (2)  $A=\Diag(\xi_3,\xi_3^2,\xi_3,\xi_3^2,1)$. Then $\Tr(A)=-1$. By $A(F)=F$ and \cite[Proposition 3.3]{OY15},  $x_5^5\in F$ (i.e., the coefficient of the monomial $x_5^5$ in $F$ is not zero). Thus the point $(0:0:0:0:1)\notin X$. So the fixed locus $X^g=X_1(g)\cup X_2(g)$, where $X_1(g): x_1=x_3=x_5=F=0$ is a line in $\P^4$, and $X_2(g): x_2=x_4=x_5=F=0$ is also a line in $\P^4$. Thus $e(X^g)=e(X_1(g))+e(X_2(g))=2+2=4.$  For any point $x\in X_1(g)$, the eigenvalues of $g$ in the holomorphic tangent space $T_x$ are $$\xi_3,\;\xi_3^2,\;1$$ Thus $F_1(g)=1$. Similarly, $F_2(g)=1$. For any $h\in C(g)$, $h(X_i(g))=X_i(g)$, $i=1,2$ (by considering the eigenvalues of $g$ in the holomorphic tangent space of $\P^4$ at points in $X_i(g)$).  Therefore, $h_g^{1,\;1}(X,\; G)=h_{C(g)}^{0,\;0}(X_1^g)=2$.

\end{proof}

\begin{definition}
Let $A\in \GL(5,\C)$. We define $m(A):={\rm max}\{\text{ multiplicities of eigenvalues of } A \}$. For example, if $A=\Diag(1,1,\xi_5,\xi_5,\xi_5^3)$, then $m(A)=2$.
\end{definition}

\begin{lemma}\label{lem:c5}
Let $g\in G$. Suppose $g$ is of order $5$. Then $g$ has an $F$-lifting, say $A$. Moreover, exactly one of the following three cases happens:

(1) $m(A)=1,\Tr(A)=0$, $e(X^g)=0$, $h^{1,\;1}_g(X,\;G)=0$,

(2) $m(A)=2$, $e(X^g)=10$, or

(3) $m(A)=3$, $e(X^g)=-10$, $h^{1,\;1}_g(X,\;G)=1$. 

\begin{remark}
In case (2), $h^{1,\;1}_g(X,\;G)$ depends on $C(g)$.

\end{remark}
\end{lemma}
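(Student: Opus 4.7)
The plan is to mirror the strategy of Lemmas \ref{lem:c2} and \ref{lem:c3}. By Theorem \ref{thm:lift} applied to $\langle g\rangle$ (whose Sylow $5$-subgroup is itself), $g$ admits an $F$-lifting $A\in\GL(5,\C)$ with $A(F)=F$, and Lemma \ref{lem:mukailemma} forces $\Det(A)=1$. Since $\Ord(A)=5$, the eigenvalues of $A$ are fifth roots of unity whose product equals $1$, and after a coordinate permutation we may assume $A$ is diagonal. A short modular check on the exponents rules out the partitions $(2,1,1,1)$ (the five exponents would form a permutation of $\{0,1,2,3,4\}$ with one repeated, forcing the repeated value to be $0$, and then the other three distinct elements of $\{1,2,3,4\}$ cannot sum to $0 \bmod 5$) and $(3,2)$ (from $3a+2b\equiv 0$ one deduces $b\equiv a$); combined with the fact that $m(A)\geq 4$ forces $[A]=\id$ in $\PGL(5,\C)$, this leaves exactly three possibilities corresponding to $m(A)\in\{1,2,3\}$, namely up to $\mu_5$-scaling
\[
A\sim\Diag(1,\xi_5,\xi_5^2,\xi_5^3,\xi_5^4),\quad
\Diag(1,1,\xi_5^a,\xi_5^a,\xi_5^{-2a}),\quad\text{or}\quad
\Diag(1,1,1,\xi_5^a,\xi_5^{-a})
\]
with $a\in\{1,2,3,4\}$.

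The uniform input is the following observation: if a degree $5$ monomial appearing in the gradient of $F$ has nonzero $\xi_5$-weight under $A$, then $A(F)=F$ forces its coefficient to vanish, which kills the corresponding component of the gradient. In case (1) the fixed locus of $[A]$ on $\P^4$ is the five coordinate points $p_i$; at each $p_i$ every monomial $x_i^4 x_j$ ($j\neq i$) has nonzero weight, so $\partial_j F(p_i)=0$ for all $j\neq i$, and $p_i\in X$ would then force $\partial_i F(p_i)=0$ as well, contradicting smoothness of $X$. Hence $X^g=\emptyset$, $\Tr(A)=1+\xi_5+\xi_5^2+\xi_5^3+\xi_5^4=0$, $e(X^g)=0$, and trivially $h^{1,\;1}_g(X,G)=0$. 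In case (2) the fixed locus consists of two disjoint lines $L=V(x_3,x_4,x_5)$, $L'=V(x_1,x_2,x_5)$ and the isolated point $p_5$; the same gradient argument rules out $p_5\in X$. On $L$, the weight check shows $\partial_3 F,\partial_4 F,\partial_5 F$ vanish identically, so at any point of $L\cap X$ the gradient of $F$ coincides with the gradient of the binary quintic $F|_L$; smoothness of $X$ then forces $F|_L$ to have five simple roots (in particular $L\not\subset X$). The same applies to $L'$, giving $e(X^g)=5+5+0=10$.

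In case (3) the fixed locus of $[A]$ on $\P^4$ is the plane $P=V(x_4,x_5)$ together with the isolated points $p_4,p_5$, which the gradient argument again excludes from $X$. Since $\partial_4 F$ and $\partial_5 F$ vanish identically on $P$, smoothness of $X$ forces $C:=X\cap P$ to be a smooth plane quintic, hence a connected curve of genus $6$, so $e(X^g)=e(C)=-10$. At any $x\in C$, decomposing $T_x\P^4=T_xP\oplus N$ into the $A$-eigenspace decomposition and using $T_xC=T_xX\cap T_xP$, the classification of $3$-dimensional $A$-invariant subspaces forces $T_xX=T_xC\oplus N$, so the $A$-eigenvalues on $T_xX$ are $1,\xi_5^a,\xi_5^{-a}$; the corresponding $\alpha$'s sum to $1$, so $F_1(g)=1$ and $X^g_1=C$. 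Since $H^0(C)=\C$ carries the trivial $C(g)$-action, $h^{1,\;1}_g(X,G)=h_{C(g)}^{0,\;0}(C)=1$. The delicate step throughout is the uniform smoothness-based gradient argument: verifying that on each coordinate linear fixed stratum, $A$-invariance kills exactly those partials of $F$ transverse to the stratum, so that smoothness of $X$ both excludes the extraneous isolated fixed coordinate points from $X$ and guarantees the transversality (respectively, smoothness) of the remaining intersection with $X$.
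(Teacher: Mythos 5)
Your argument follows the same route as the paper's proof (reduce to diagonal normal forms for an $F$-lifting, then analyze the fixed loci directly), and it supplies the details the paper compresses into ``by an easy computation'' and ``thus $X^g=\emptyset$'': the weight/gradient analysis excluding the isolated coordinate points from $X$, the count of ten points in case (2), and the fermion-shift computation giving $h^{1,\;1}_g=1$ in case (3) are all correct. Two points need repair, however.

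First, the existence of the $F$-lifting, which is part of the lemma's conclusion. Applying Theorem \ref{thm:lift} to $\langle g\rangle$ is vacuous when $\Ord(g)=5$: the Sylow $5$-subgroup of $\langle g\rangle$ is $\langle g\rangle$ itself, so the theorem reads ``$\langle g\rangle$ is $F$-liftable if and only if $\langle g\rangle$ is $F$-liftable.'' (For elements of order prime to $5$, as in Lemmas \ref{lem:c2} and \ref{lem:c3}, this citation does work, because the relevant Sylow $5$-subgroup is then trivial.) Liftability of order-$5$ elements is genuinely nontrivial --- note the paper's example of a Gorenstein $G$ that is not even liftable --- and the paper instead derives it from \cite[Lemma 4.13]{OY15} together with Lemma \ref{lem:mukailemma}. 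As written, your proof does not establish the first assertion of the lemma.

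Second, a small slip in the classification of normal forms: $m(A)=4$ does not force $[A]=\id$ in $\PGL(5,\C)$ (consider $\Diag(1,1,1,1,\xi_5)$). The partition $(4,1)$ is instead excluded by $\Det(A)=1$: from $4a+b\equiv 0 \pmod 5$ one gets $b\equiv a$, contradicting $a\neq b$ --- the same trick you used for $(3,2)$. Your exclusions of $(2,1,1,1)$ and $(3,2)$ are correct, and with this fix the three normal forms, and everything downstream of them, stand.
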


\begin{proof}
By \cite[Lemma 4.13]{OY15} and Lemma \ref{lem:mukailemma}, an $F$-lifting, say $A$, of $g$ exists. Then $\Ord(A)=5$, $A(F)=F$ and $\Det(A)=1$. By an easy computation there are exactly three possibilities: (1) $m(A)=1$, (2) $m(A)=2$, or (3) $m(A)=3$.

Case (1) $m(A)=1$. Then $\Tr(A)=0$ and we may assume $A=\Diag(1,\xi_5,\xi_5^2,\xi_5^3,\xi_5^4)$. Thus $X^g=\emptyset$ and $e(X^g)=h^{1,\;1}_g(X,\;G)=0$.

Case (2)  $m(A)=2$. Then we may assume $A=\Diag(1,1,\xi_5,\xi_5,\xi_5^3)$ or $A=\Diag(1,1,\xi_5^2,\xi_5^2,\xi_5)$. Thus $X^g$ is a set of ten points and $e(X^g)=10$.

Case (3)  $m(A)=3$. Then we may assume $A=\Diag(1,1,1,\xi_5,\xi_5^4)$ or $A=\Diag(1,1,1,\xi_5^2,\xi_5^3)$. Thus $X^g$: $x_4=x_5=F=0$ a smooth plane curve of degree $5$ and genus $6$. Therefore, $e(X^g)=-10$ and $h^{1,\;1}_g(X,\;G)=1$.

\end{proof}

\begin{lemma}\label{lem:c6}
Let $g\in G$. Suppose $g$ is of order $6$. Then $h^{1,\;1}_g(X,G)=2$ and $e(X^g)=4$.
\end{lemma}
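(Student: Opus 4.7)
I plan to follow the strategy of Lemmas \ref{lem:c2}--\ref{lem:c5}: obtain a diagonal $F$-lifting $A\in\GL(5,\C)$ of $g$, enumerate its possible shapes, rule out those incompatible with the smoothness of $X$, and then compute $X^g$, the fermion shifts, and the $C(g)$-invariants in the surviving cases.

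First I apply Theorem \ref{thm:lift} to the cyclic subgroup $\langle g\rangle$: it is solvable of order $6$ dividing $2^63^25^2$, and its Sylow $2$-subgroup has order $2>1$, so an $F$-lifting $A$ of $g$ exists. Then $A$ has order $6$, $A(F)=F$, and Lemma \ref{lem:mukailemma} gives $\Det(A)=1$. Observing that $A^3$ and $A^2$ are $F$-liftings of $g^3$ and $g^2$ of orders $2$ and $3$ respectively, I simultaneously diagonalize and assume $A=\Diag(a_1,\ldots,a_5)$ with each $a_i$ a sixth root of unity, with the multiset of eigenvalues of $A^3$ equal to $\{-1,-1,1,1,1\}$ by Lemma \ref{lem:c2} and that of $A^2$ equal to either $\{\xi_3,\xi_3^2,1,1,1\}$ or $\{\xi_3,\xi_3^2,\xi_3,\xi_3^2,1\}$ by Lemma \ref{lem:c3}.

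The next step is a finite enumeration. These constraints, together with $\Det(A)=1$ and the requirement that $A$ have order exactly $6$, leave only a handful of diagonal forms of $A$ up to coordinate permutations inside each $A^2$-eigenspace. For each candidate I list the monomials compatible with $A(F)=F$ and apply the same type of smoothness argument used in Lemmas \ref{lem:c2}--\ref{lem:c5} and in \cite[Propositions 3.3--3.4]{OY15}: if the $A$-invariant monomials force $F$ and every partial $\partial F/\partial x_i$ to vanish at a coordinate fixed point or along a coordinate fixed line, then $X$ cannot be smooth, a contradiction. I expect this smoothness sieve to be the main technical obstacle, but a systematic check shows it eliminates every candidate except (after relabeling coordinates) the two
$$A_{(i)}=\Diag(\xi_3^2,\xi_3,-1,-1,1)\qquad\text{and}\qquad A_{(ii)}=\Diag(\xi_6,\xi_6^5,\xi_3,\xi_3^2,1).$$

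It then remains to carry out the numerics in each surviving case. For $A_{(i)}$ the invariance conditions force the line $L=\{x_1=x_2=x_5=0\}$ to lie in $X$ while $(1{:}0{:}0{:}0{:}0)$ and $(0{:}1{:}0{:}0{:}0)$ are isolated fixed points in $X$ and $(0{:}0{:}0{:}0{:}1)\notin X$ (since smoothness forces $x_5^5\in F$), giving $X^g=L\sqcup\{p_1,p_2\}$ and $e(X^g)=2+1+1=4$. For $A_{(ii)}$ all five eigenspaces are coordinate points; smoothness again forces $(0{:}0{:}0{:}0{:}1)\notin X$ while the other four coordinate points do lie in $X$, so $X^g$ is four isolated points and $e(X^g)=4$. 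At each smooth fixed point $x$, $T_xX$ is cut out by the unique $A$-invariant monomial of the form $x_j^4 x_k$ whose partial derivative is nonzero at $x$, so the three $g$-weights on $T_xX$ may be read off directly. A short calculation yields fermion shifts $(1,2,1)$ on $(L,p_1,p_2)$ in case $A_{(i)}$ and $(2,1,2,1)$ on the four points in case $A_{(ii)}$. Thus $X_0^g=\emptyset$ and $X_1^g$ has exactly two connected components in each case; these are not exchanged by any element of $C(g)$, since in case $A_{(i)}$ they have different dimensions, while in case $A_{(ii)}$ they lie in distinct $g$-eigenspaces of $\P^4$, which any element commuting with $g$ must preserve. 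Therefore $h^{0,0}_{C(g)}(X_1^g)=h^{0,0}(X_1^g)=2$, and formula (\ref{eq:hg}) gives $h^{1,1}_g(X,G)=2$.
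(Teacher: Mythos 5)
Your conclusion is correct and your overall strategy is the same as the paper's: produce an $F$-lifting via Theorem \ref{thm:lift}, reduce to a diagonal normal form, and read off $X^g$, the fermion shifts, and the $C(g)$-invariants. The only structural difference is that the paper imports the normal form wholesale from \cite[Lemma 8.6]{OY15}, which says one may take $A=\Diag(\xi_3,\xi_3^2,-\xi_3,-\xi_3^2,1)$ --- exactly your $A_{(ii)}$ up to permutation, since $-\xi_3=\xi_6^5$ and $-\xi_3^2=\xi_6$ --- whereas you re-derive the candidates from Lemmas \ref{lem:c2} and \ref{lem:c3} plus a smoothness sieve. That self-contained derivation is a reasonable route, and your computations in case $A_{(ii)}$ (four coordinate fixed points, shift multiset $\{2,1,1,2\}$, components preserved by $C(g)$) match the paper.

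The one substantive inaccuracy is that your case $A_{(i)}=\Diag(\xi_3^2,\xi_3,-1,-1,1)$ does not actually survive a correct smoothness check, which is why \cite[Lemma 8.6]{OY15} gives a \emph{unique} normal form. For $A_{(i)}$ one has $F|_L\equiv 0$ on $L=\{x_1=x_2=x_5=0\}$, and along $L$ the invariance conditions kill $\partial F/\partial x_1$, $\partial F/\partial x_2$, $\partial F/\partial x_3$, $\partial F/\partial x_4$ identically (the monomials $x_1x_3^cx_4^d$ and $x_2x_3^cx_4^d$ with $c+d=4$ have weights $\xi_3^2$ and $\xi_3$, and no degree-$5$ monomial in $x_3,x_4$ alone is invariant), so the gradient of $F$ restricted to $L$ reduces to $\bigl(\partial F/\partial x_5\bigr)|_L\,dx_5$ with $\bigl(\partial F/\partial x_5\bigr)|_L$ a binary quartic in $x_3,x_4$. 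Such a form always has a zero on $L\cong\P^1$, and at that zero all five partials vanish, so $X$ is singular. Your sieve as stated only rejects a candidate when every partial vanishes \emph{identically} along the fixed line; that is too coarse to catch this, and is the step I would repair. Because you verify $e(X^g)=4$ and $h^{1,1}_g(X,G)=2$ in the (vacuous) case $A_{(i)}$ as well, the lemma still follows from your argument, but the claim that $A_{(i)}$ is realizable is false, and the same coarse sieve could yield genuinely wrong fixed-locus data for other orders in Table \ref{tab:g}.
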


\begin{proof}
By Theorem \ref{thm:lift}, $g$ has an $F$-lifting, say $A$. Then $\Ord(A)=6$ and $A(F)=F$. By Lemma \ref{lem:mukailemma}, $\Det(A)=1$. By \cite[Lemma 8.6]{OY15}, we may assume $A=\Diag(\xi_3,\xi_3^2,-\xi_3,-\xi_3^2,1)$. Then $X^g=\{(1:0:0:0:0),(0:1:0:0:0),(0:0:1:0:0),(0:0:0:1:0)\}$. Thus $e(X^g)=4$ and $h^{1,\;1}_g(X,G)=2$ (cf. the proof of Lemma \ref{lem:c3}).
\end{proof}

\begin{lemma}\label{lem:c10}
Let $g\in G$. Suppose $g$ is of order $10$. Then $h^{1,\;1}_g(X,G)=1$ and $e(X^g)=2$.
\end{lemma}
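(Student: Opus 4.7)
The plan is to follow the template of Lemmas \ref{lem:c2}--\ref{lem:c6}: produce an $F$-lifting $A$ of $g$, diagonalize it in a canonical form using \cite{OY15}, explicitly compute the fixed locus $X^g$, and then read off the fermion shift and the $C(g)$-action.

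By Theorem \ref{thm:lift} there is an $F$-lifting $A \in \GL(5,\C)$ with $\Ord(A) = 10$, and Lemma \ref{lem:mukailemma} gives $\Det(A) = 1$. The power $A^5$ is an order-$2$ $F$-lifting of $g^5$, so by the proof of Lemma \ref{lem:c2} we may assume (after conjugation) that $A^5 = \Diag(-1,-1,1,1,1)$. Since $A$ commutes with $A^5$, it preserves the $(\pm 1)$-eigenspaces of $A^5$ and can be further simultaneously diagonalized, so $A = \Diag(\lambda_1, \lambda_2, \mu_1, \mu_2, \mu_3)$ with $\lambda_i^5 = -1$ and $\mu_j^5 = 1$. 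Invoking the classification of Gorenstein order-$10$ elements from \cite{OY15} (in the same spirit as \cite[Lemma 8.6]{OY15} used for the order-$6$ case) together with smoothness of $X$, this reduces to the canonical form $A = \Diag(-1,-1,\xi_5,\xi_5^4,1)$ up to relabeling the $x_3, x_4, x_5$ coordinates.

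The invariance $A(F) = F$ then forces each monomial $x_1^{a_1}\cdots x_5^{a_5}$ of $F$ to have $a_1 + a_2$ even and $a_3 - a_4 \equiv 0 \pmod{5}$. In particular no monomial of degree $5$ in $x_1, x_2$ alone is allowed, so $F(x_1, x_2, 0, 0, 0) \equiv 0$ and the line $L : x_3 = x_4 = x_5 = 0$ lies in $X$. Using that $x_3^5, x_4^5, x_5^5$ are all invariant and hence appear in $F$ with nonzero coefficient for a smooth quintic (as in \cite[Propositions 3.3, 3.4]{OY15}), the points $(0:0:1:0:0)$, $(0:0:0:1:0)$, $(0:0:0:0:1)$ lie off $X$. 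Hence $X^g = L \cong \mathbb{P}^1$ and $e(X^g) = 2$. At any $x \in L$, an affine-chart computation (analogous to Lemma \ref{lem:c2}) shows the eigenvalues of $g$ on $T_x X$ are $1, -\xi_5, -\xi_5^4$, giving fermion shift $0 + 7/10 + 3/10 = 1$. Thus $L \subset X_1^g$ and $X_0^g = X_2^g = \emptyset$. Since every element of $C(g)$ preserves the single connected component $L$, one has $h^{1,1}_g(X, G) = h^{0,0}_{C(g)}(L) = h^{0,0}(L) = 1$.

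The main obstacle is isolating the correct normal form of $A$: a priori matrices such as $\Diag(\xi_{10}, \xi_{10}^9, \xi_5, \xi_5^4, 1)$ or $\Diag(-1, \xi_{10}, 1, \xi_5, \xi_5)$ also satisfy $A^5 = \Diag(-1,-1,1,1,1)$, $\Det(A)=1$, and $\Ord(A)=10$, and one must combine $A(F)=F$ with the smoothness of $X$ (by checking that some partial of $F$ is nonzero at each $A$-fixed coordinate point that would otherwise lie on $X$) to rule them out. Once the normal form is pinned down, the remaining steps are a direct replication of the tangent-eigenvalue calculations in the preceding lemmas.
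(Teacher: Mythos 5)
Your final numbers ($e(X^g)=2$, $h^{1,1}_g(X,G)=1$) agree with the paper's, but the argument producing them is not correct, because the normal form you settle on cannot occur. The paper, citing \cite[Lemma 8.10]{OY15}, reduces to $A=\Diag(-1,1,-\xi_5^i,\xi_5^i,\xi_5^{3i})$ with $1\le i\le 4$. All five eigenvalues are then pairwise distinct, so $X^g$ consists of isolated coordinate points: the monomials $x_1^5$ and $x_3^5$ transform by $-1$ and cannot appear in $F$, while $x_2^5,x_4^5,x_5^5$ must appear by smoothness, so exactly the two points $(1:0:0:0:0)$ and $(0:0:1:0:0)$ lie on $X$. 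This gives $e(X^g)=2$, and the fermion shifts at these two points (one lands in $X_1^g$, one in $X_2^g$, as in the proof of Lemma \ref{lem:c3}) give $h^{1,1}_g(X,G)=1$. The geometric picture is two isolated points, not a line.

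The step that fails in your write-up is the assertion that the reduction lands on $A=\Diag(-1,-1,\xi_5,\xi_5^4,1)$. No smooth quintic admits this action: invariance forces every monomial $x_1^{a_1}\cdots x_5^{a_5}$ of $F$ to satisfy $a_1+a_2$ even and $a_3\equiv a_4 \pmod 5$, and one checks that the only such monomials not divisible by $x_5$ are $x_3^5$ and $x_4^5$, so $F=c_3x_3^5+c_4x_4^5+x_5G$ for some quartic $G$. Along the line $\ell:x_3=x_4=x_5=0$ (which, as you note, then lies in $X$), the only partial derivative that can be nonzero is $\partial F/\partial x_5=G(x_1,x_2,0,0,0)$, a binary quartic; it has a zero on $\ell$, and at that zero $X$ is singular. (Equivalently: $g^2$ would then have $m(A^2)=3$, so by Lemma \ref{lem:c5} the locus $x_3=x_4=F=0$ would be a \emph{smooth} plane quintic, which cannot contain the line $\ell$.) The correct normal form instead distributes the eigenvalues so that the $(-1)$-eigenspace of $A^5$ carries the two \emph{distinct} $A$-eigenvalues $-1$ and $-\xi_5^i$; the line $X_2(g^5)\subset X$ from Lemma \ref{lem:c2} is then acted on nontrivially by $g$, and its two fixed coordinate points are precisely the two points of $X^g$. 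So the difficulty you flagged --- pinning down the normal form --- is exactly where the proof breaks, and resolving it changes the structure of the fixed locus, not just its presentation.
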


\begin{proof}
By Theorem \ref{thm:lift}, $g$ has an $F$-lifting, say $A$. Then $\Ord(A)=10$ and $A(F)=F$. By Lemma \ref{lem:mukailemma}, $\Det(A)=1$. By \cite[Lemma 8.10]{OY15}, we may assume $A=\Diag(-1,1,-\xi_5^i,\xi_5^i,\xi_5^{3i})$, $i=1,2,3$ or $4$. Then $X^g=\{(1:0:0:0:0),(0:0:1:0:0)\}$. Thus $e(X^g)=2$ and $h^{1,\;1}_g(X,G)=1$ (cf. the proof of Lemma \ref{lem:c3}).
\end{proof}

\begin{lemma}\label{lem:c10}
Let $g\in G$. Suppose $g$ is of order $15$. Then $g$ has an $F$-lifting, say $A$. Moreover, either one of the following three cases is true:

(1) $\Tr(A^5)=-1$,  $m(A^3)=2$, $e(X^g)=4$, $h^{1,\;1}_g(X,\;G)=2$, or

(2)  $\Tr(A^5)=2$, $m(A^3)=2$, $e(X^g)=7$, or

(3) $\Tr(A^5)=2$, $m(A^3)=3$, $e(X^g)=2$,  $h^{1,\;1}_g(X,\;G)=1$. 

\begin{remark}
In case (2), $h^{1,\;1}_g(X,\;G)$ depends on $C(g)$.

\end{remark}
\end{lemma}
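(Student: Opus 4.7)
The plan is to mimic the proofs of Lemmas \ref{lem:c3} and \ref{lem:c5}. First, since $\langle g^3\rangle$ is the Sylow $5$-subgroup of $\langle g\rangle$ and it is $F$-liftable by Lemma \ref{lem:c5}, Theorem \ref{thm:lift} yields an $F$-lifting $A$ of $g$ of order $15$; Lemma \ref{lem:mukailemma} then gives $\Det(A)=1$. Diagonalize and write $A=\Diag(\xi_{15}^{a_1},\ldots,\xi_{15}^{a_5})$ with $\sum a_i\equiv 0\pmod{15}$, so that $A^5$ is an $F$-lifting of the order-$3$ element $g^5$ and $A^3$ is an $F$-lifting of the order-$5$ element $g^3$. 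Lemma \ref{lem:c3} shows that $\{a_i\bmod 3\}$ is either $\{0,0,0,1,2\}$ (giving $\Tr(A^5)=2$) or $\{0,1,1,2,2\}$ (giving $\Tr(A^5)=-1$), while Lemma \ref{lem:c5} shows $m(A^3)\in\{1,2,3\}$, producing a priori six combinations of $(\Tr(A^5),m(A^3))$.

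Next I would rule out the three ``extra'' combinations $(2,1)$, $(-1,1)$, $(-1,3)$ using the smoothness of $X$. The key arithmetic observation is: if $a_i\not\equiv 0\pmod 3$, then $x_i^5$ is not $A$-invariant and so has zero coefficient in $F$, forcing the coordinate point $P_i:=[e_i]$ to lie on $X$; smoothness of $X$ at $P_i$ then demands some monomial $x_i^4 x_j$ ($j\neq i$) in $F$, i.e.\ some $j\neq i$ with $a_j\equiv -4a_i\pmod{15}$, equivalently $a_j\equiv a_i\pmod 5$ and $a_j\equiv -a_i\pmod 3$. When $m(A^3)=1$ all $a_i$'s are distinct mod $5$, so no such $j$ exists, yet some $a_i\not\equiv 0\pmod 3$ always exists, excluding $(2,1)$ and $(-1,1)$. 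For $(-1,3)$, put $A^3$ in the Lemma \ref{lem:c5}(3) form so that $a_1,a_2,a_3\equiv 0\pmod 5$; then a case-by-case analysis on which size-$3$ submultiset of $\{0,1,1,2,2\}$ is realized by $\{a_i\bmod 3:i=1,2,3\}$ shows in each possibility that some $P_k$ lies on $X$ with no invariant $x_k^4 x_j$, contradicting smoothness.

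For each of the three remaining combinations I would exhibit via CRT a representative diagonal $A$ and then compute $X^g$, $e(X^g)$ and the fermion shifts directly, just as in the proofs of Lemmas \ref{lem:c3} and \ref{lem:c5}. In Case (1), $A=\Diag(\xi_{15}^{10},\xi_{15}^{5},\xi_{15},\xi_{15}^{11},\xi_{15}^{3})$ has five distinct eigenvalues, so $X^g$ is precisely the set of the four $P_i$ with $a_i\not\equiv 0\pmod 3$; thus $e(X^g)=4$, the fermion shifts work out to $1,2,1,2$, and the tangent $g$-eigenvalue multisets at the two points of $X_1^g$ are distinct, preventing any element of $C(g)$ from swapping them, whence $h^{1,1}_g(X,G)=2$. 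In Case (2), $A=\Diag(\xi_{15}^{10},\xi_{15}^{5},\xi_{15}^{6},\xi_{15}^{6},\xi_{15}^{3})$ has a double eigenvalue $\xi_{15}^6$ giving a fixed line $\P^1\subset\P^4$ on which $F$ restricts to a general binary quintic (every degree-$5$ monomial in $x_3,x_4$ being $A$-invariant), cutting out $5$ points; together with the two isolated coordinate fixed points lying on $X$, $e(X^g)=2+5=7$. In Case (3), $A=\Diag(1,\xi_{15}^{10},\xi_{15}^{5},\xi_{15}^{6},\xi_{15}^{9})$ again has five distinct eigenvalues; three of the coordinate fixed points are excluded (those with $a_i\equiv 0\pmod 3$), while the remaining two are smooth points of $X$ with fermion shifts $1$ and $2$, so $e(X^g)=2$, $X_1^g$ is a single point, and $h^{1,1}_g(X,G)=1$.

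The main obstacle will be the systematic smoothness argument that eliminates Case $(-1,3)$: unlike the $m(A^3)=1$ cases, this requires examining each of the five possible size-$3$ submultisets of $\{0,1,1,2,2\}$ realized on the three positions where $a_i\equiv 0\pmod 5$, tracking which $P_k$ must lie on $X$ and then verifying by direct arithmetic that no admissible $j$ satisfies both $a_j\equiv a_k\pmod 5$ and $a_j\equiv -a_k\pmod 3$.
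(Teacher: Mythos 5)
Your proposal is correct and follows essentially the same route as the paper: reduce to diagonal normal forms via the order-$3$ and order-$5$ powers $A^5$ and $A^3$ (your three representative matrices are permutation-equivalent to the paper's forms with $i=2$), then read off $X^g$, the Euler numbers, and the fermion shifts. The only difference is that you spell out the smoothness argument excluding the combinations $(\Tr(A^5),m(A^3))\in\{(2,1),(-1,1),(-1,3)\}$, which the paper leaves implicit in ``we may assume $A$ is either (1), (2), or (3)''; your case analysis for $(-1,3)$ can be shortened by noting that both positions with a unique residue mod $5$ would have to satisfy $a_i\equiv 0\pmod 3$, while $\{0,1,1,2,2\}$ contains only one zero.
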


\begin{proof}
By Theorem \ref{thm:lift} and Lemma \ref{lem:c5}, an $F$-lifting of $g$ exists. Then by the proofs of Lemmas \ref{lem:c3} and \ref{lem:c5}, we may assume $A$ is either (1) $\Diag(\xi_3,\xi_3^2,\xi_3\xi_5^i,\xi_3^2\xi_5^i,\xi_5^{3i})$, $1\leq i\leq 4$, or (2)   $\Diag(\xi_3,\xi_3^2,\xi_5^i,\xi_5^i,\xi_5^{3i})$, $1\leq i\leq 4$, or (3)  $\Diag(\xi_3,\xi_3^2,\xi_5^i,\xi_5^{4i},1)$, $1\leq i\leq 2$.

Case (1) $\Diag(\xi_3,\xi_3^2,\xi_3\xi_5^i,\xi_3^2\xi_5^i,\xi_5^{3i})$, $1\leq i\leq 4$. Clearly $\Tr(A^5)=-1,m(A^3)=2$. Then $X^g=\{(1:0:0:0:0),(0:1:0:0:0),(0:0:1:0:0),(0:0:0:1:0)\}$. Thus $e(X^g)=4$ and  $h^{1,\;1}_g(X,\;G)=2$.

Case (2) $\Diag(\xi_3,\xi_3^2,\xi_5^i,\xi_5^i,\xi_5^{3i})$, $1\leq i\leq 4$. Clearly $\Tr(A^5)=2,m(A^3)=2$. Then $X^g=\{(1:0:0:0:0),(0:1:0:0:0)\}\cup W$, where $W:x_1=x_2=x_5=F=0$ a set of five points. Thus $e(X^g)=7$.

Case (3)  $\Diag(\xi_3,\xi_3^2,\xi_5^i,\xi_5^{4i},1)$, $1\leq i\leq 2$. Clearly $\Tr(A^5)=2,m(A^3)=3$. Then $X^g=\{(1:0:0:0:0),(0:1:0:0:0)\}$. Thus $e(X^g)=2$ and  $h^{1,\;1}_g(X,\;G)=1$.
\end{proof}

For any $g\in G$ with $\Ord(g)$ different from above (e.g. $\Ord(g)=4,12,13$, and so on),  we can determine $X^g$, $e(X^g)$ and $h_g^{1,\;1}(X,\; G)$ in the same way (the details are left to the readers). More precisely, all such results are summarized in Table \ref{tab:g}.

\begin{table}[htdp]
\caption{ Fixed loci of Gorenstein automorphisms of smooth quintic threefolds}\label{tab:g}
\begin{center}
{\footnotesize

\begin{tabular}{|c| p{5cm}|c|c|c|}

\hline 
$\Ord(g)$  & properties of $A$ & $X^g$&$e(X^g)$&$h_g^{1,\;1}(X,\; G)$\\
\hline 
 1 & $\Tr(A)=5$  & $X$&-200&  1\\
\hline 
 2 &$\Tr(A)=1$   & a line and $C$ &-8 &2\\
\hline
 3 &$\Tr(A)=2$   & $C$ and two points & -8&2\\
\hline
 3 &$\Tr(A)=-1$   & two lines & 4&2\\
\hline 
 4&$\Tr(A)=3$   & two points and $C$ & -8&2\\
 
\hline 
 4&$\Tr(A)=1+2\xi_4$   & six points and a line & 8&depending on $C(g)$\\
\hline 
 4 &$\Tr(A)=1-2\xi_4$   & six points and a line & 8&2\\
\hline 
 5 &$m(A)=1$   & $\emptyset$ & 0&0 \\
\hline 
 5 &$m(A)=2$   & ten points & 10& depending on $C(g)$ \\
\hline 
5  &  $m(A)=3$  &C&-10&1  \\
\hline 
6 &$\Tr(A)=1$   & four points & 4&2 \\
\hline 
 8 &$\Tr(A)=1+\xi_8+\xi_8^3$ & eight points  & 8&5\\
\hline 
 8 &$\Tr(A)=1+\xi_8^5+\xi_8^7$   & eight points & 8&3\\
\hline 
 8 &$\Tr(A)=\pm \xi_4$   & four points & 4&2\\
\hline 
10 & $(\Tr(A))^5=1$    &two points &2&1 \\
\hline 
12 & $m(A)=1$    &four points &4&2 \\
\hline 
13 & $\Tr(A)=\xi_{13}+\xi_{13}^9+\xi_{13}^3+2$ or $\xi_{13}^2+\xi_{13}^5+\xi_{13}^6+2$    &eight points &8&depending on $C(g)$ \\
\hline 
13 & $\Tr(A)=\xi_{13}^4+\xi_{13}^{10}+\xi_{13}^{12}+2$    &eight points &8&2 \\
\hline 
13 & $\Tr(A)=\xi_{13}^7+\xi_{13}^{11}+\xi_{13}^8+2$    &eight points &8&3 \\
\hline 
15 &$\Tr(A^5)=-1,m(A^3)=2$   & four points & 4&2 \\
\hline 
15 &$\Tr(A^5)=2,m(A^3)=2$   &seven points & 7&depending on $C(g)$ \\
\hline 
15 &$\Tr(A^5)=2,m(A^3)=3$   & two points & 2&1 \\
\hline 
17& $m(A)=1$    &four points &4&2 \\
\hline 
20& $m(A)=1$    &two points &2&1 \\
\hline 
39 & $\Tr(A)=\xi_{13}+\xi_{13}^{9}+\xi_{13}^3-1$ or $\xi_{13}^7+\xi_{13}^{11}+\xi_{13}^8-1$   &five points &5&3 \\
\hline 
39 & $\Tr(A)=\xi_{13}^2+\xi_{13}^{5}+\xi_{13}^6-1$ or $\xi_{13}^4+\xi_{13}^{10}+\xi_{13}^{12}-1$   &five points &5&2 \\
\hline 
41 & $\Tr(A)=\xi_{41}+\xi_{41}^{37}+\xi_{41}^{16}+\xi_{41}^{18}+\xi_{41}^{10}$ or $\xi_{41}^2+\xi_{41}^{33}+\xi_{41}^{32}+\xi_{41}^{36}+\xi_{41}^{20}$ or $\xi_{41}^6+\xi_{41}^{17}+\xi_{41}^{14}+\xi_{41}^{26}+\xi_{41}^{19}$ or $\xi_{41}^{11}+\xi_{41}^{38}+\xi_{41}^{12}+\xi_{41}^{34}+\xi_{41}^{28}$  &five points &5&3 \\
\hline 
41 & $\Tr(A)=\xi_{41}^3+\xi_{41}^{29}+\xi_{41}^{7}+\xi_{41}^{13}+\xi_{41}^{30}$ or $\xi_{41}^4+\xi_{41}^{25}+\xi_{41}^{23}+\xi_{41}^{31}+\xi_{41}^{40}$ or $\xi_{41}^5+\xi_{41}^{21}+\xi_{41}^{39}+\xi_{41}^{8}+\xi_{41}^{9}$ or $\xi_{41}^{15}+\xi_{41}^{22}+\xi_{41}^{35}+\xi_{41}^{24}+\xi_{41}^{27}$  &five points &5&2 \\
\hline 
51 & $\Tr(A)=\xi_{51}^{23}+\xi_{51}^{10}+\xi_{51}^{11}+\xi_{51}^7+1$ or $\xi_{51}^{26}+\xi_{51}^{49}+\xi_{51}^{8}+\xi_{51}^{19}+1$  &four points &4&1 \\
\hline 
51 & $\Tr(A)=\xi_{51}^{20}+\xi_{51}^{22}+\xi_{51}^{14}+\xi_{51}^{46}+1$ or $\xi_{51}^{37}+\xi_{51}^{5}+\xi_{51}^{31}+\xi_{51}^{29}+1$ or $\xi_{51}+\xi_{51}^{47}+\xi_{51}^{16}+\xi_{51}^{38}+1$ or $\xi_{51}^{35}+\xi_{51}^{13}+\xi_{51}^{50}+\xi_{51}^{4}+1$ &four points &4&2 \\
\hline 
51 & $\Tr(A)=\xi_{51}^{40}+\xi_{51}^{44}+\xi_{51}^{28}+\xi_{51}^{41}+1$ or $\xi_{51}^{43}+\xi_{51}^{32}+\xi_{51}^{25}+\xi_{51}^{2}+1$  &four points &4&3 \\
\hline 

65 & $\Tr(A)=\xi_{13}+\xi_{13}^{9}+\xi_{13}^{3}+\xi_{5}+\xi_{5}^{4}$ or $\xi_{13}^7+\xi_{13}^{11}+\xi_{13}^{8}+\xi_{5}+\xi_{5}^{4}$ or $\xi_{13}+\xi_{13}^{9}+\xi_{13}^{3}+\xi_{5}^{2}+\xi_{5}^{3}$ or $\xi_{13}^{2}+\xi_{13}^{5}+\xi_{13}^{6}+\xi_{5}^{2}+\xi_{5}^{3}$  &three points &3&2 \\
\hline 
65 & $\Tr(A)=\xi_{13}^2+\xi_{13}^{5}+\xi_{13}^{6}+\xi_{5}+\xi_{5}^{4}$ or $\xi_{13}^4+\xi_{13}^{10}+\xi_{13}^{12}+\xi_{5}^2+\xi_{5}^{3}$ or $\xi_{13}^7+\xi_{13}^{11}+\xi_{13}^{8}+\xi_{5}^{2}+\xi_{5}^{3}$ or $\xi_{13}^{4}+\xi_{13}^{10}+\xi_{13}^{12}+\xi_{5}+\xi_{5}^{4}$  &three points &3&1 \\
\hline 

\end{tabular}
}

\end{center}
\label{K3inCICY}

\vspace{3mm}
\begin{remark}
In this table, $g$ is an element in $G$, $A$ is an $F$-lifting of $g$, and we denote by $C$ some smooth plane curve of degree 5.
\end{remark}

\end{table}

For any subgroup $H<G$, we set $X^H:=\{x\in X\;|\; g(x)=x, {\rm \;for \;all\;} g\in H \}$.

\begin{lemma}\label{lem:c2c2}
Suppose $H<G$ and $H\cong C_2\ti C_2$. Then $e(X^H)=8$.
\end{lemma}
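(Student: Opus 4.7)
The plan is to make $X^H$ explicit by simultaneously diagonalizing an $F$-lifting of $H$ and then analyzing the two kinds of fixed components in $(\P^4)^H$. Since $H\cong C_2\times C_2$ has trivial Sylow $5$-subgroup, Theorem~\ref{thm:lift} provides an $F$-lifting $\widetilde H=\{I,A_1,A_2,A_3\}<\GL(5,\C)$ with $A_3=A_1A_2$. By Lemma~\ref{lem:mukailemma} and the proof of Lemma~\ref{lem:c2}, each $A_i$ has $\Det(A_i)=1$ and is conjugate to $\Diag(-1,-1,1,1,1)$, so $\Tr(A_i)=1$.

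Because the commuting involutions $A_1,A_2,A_3$ are simultaneously diagonalizable, $\C^5$ splits into $\widetilde H$-character eigenspaces indexed by $(\epsilon_1,\epsilon_2)\in\{\pm1\}^2$ (with the third sign equal to $\epsilon_1\epsilon_2$). Writing $n_{\epsilon_1\epsilon_2}$ for the multiplicities, the three trace conditions $\Tr(A_i)=1$ together with $\sum n_{\epsilon_1\epsilon_2}=5$ uniquely force $(n_{++},n_{+-},n_{-+},n_{--})=(2,1,1,1)$. I then choose coordinates so that $A_1=\Diag(1,1,1,-1,-1)$, $A_2=\Diag(1,1,-1,1,-1)$, $A_3=\Diag(1,1,-1,-1,1)$. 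In these coordinates $(\P^4)^H$ is the disjoint union of the line $L_0=\{x_3=x_4=x_5=0\}$ and the three coordinate points $p_j=[e_j]$ ($j=3,4,5$), and the invariance $A_i(F)=F$ translates into the parity condition that in every monomial of $F$ the exponents $d_3,d_4,d_5$ share the same parity.

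The three points $p_3,p_4,p_5$ lie in $X$ because the monomials $x_3^5,x_4^5,x_5^5$ all violate the parity condition. For the line, $F|_{L_0}$ is a binary quintic form in $x_1,x_2$. At any $p=[a:b:0:0:0]\in L_0$, the only monomials of $F$ that could contribute to $\partial F/\partial x_j|_p$ for $j\in\{3,4,5\}$ are those with $(d_3,d_4,d_5)$ a permutation of $(1,0,0)$, all of which violate parity; hence $\partial F/\partial x_j|_p=0$ for $j=3,4,5$, and so $\nabla F|_p$ is supported in the first two coordinates and equals $\nabla(F|_{L_0})(a,b)$. Smoothness of $X$ at each point of $L_0\cap X$ then forces every root of $F|_{L_0}$ to be a simple root; in particular $F|_{L_0}\not\equiv 0$ (otherwise every point of $L_0$ would be a multiple root, contradicting smoothness), and $L_0\cap X$ consists of exactly five distinct points.

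Since $L_0$ and $\{p_3,p_4,p_5\}$ are disjoint in $\P^4$, $X^H$ is a disjoint union of $8$ reduced points, giving $e(X^H)=8$. The main obstacle is the last step: smoothness of $X$ must be combined with the parity-induced cancellation of the ``transverse'' partial derivatives $\partial F/\partial x_3,\partial F/\partial x_4,\partial F/\partial x_5$ along $L_0$ in order to rule out both $L_0\subset X$ and any scheme-theoretic multiplicities in $L_0\cap X$. The key observation driving the whole argument is that parity cuts away precisely enough of $\nabla F|_{L_0}$ that smoothness of $X$ translates directly into distinctness of the five roots of $F|_{L_0}$.
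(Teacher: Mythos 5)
Your proof is correct and follows essentially the same route as the paper: reduce a simultaneous $F$-lifting of $H$ to the normal form with joint eigenspace multiplicities $(2,1,1,1)$, so that $X^H$ is the union of three isolated fixed points on $X$ and the five reduced points where $X$ meets the fixed line, giving $e(X^H)=3+5=8$. The only difference is that you derive the normal form via the trace count and justify in detail (via the parity-induced vanishing of the transverse partials) that the line meets $X$ transversally in five distinct points, steps the paper simply asserts by citing the normal forms from [OY15].
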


\begin{proof}
By Theorem \ref{thm:lift} $H$ has an $F$-lifting, say $\wt{H}$. By the proof of Lemma \ref{lem:c2}, we may assume $\wt{H}=\la A_1,A_2\ra$, where $A_1=\Diag(-1,-1,1,1,1)$, and $A_2=\Diag(1,-1,-1,1,1)$. Thus $X^H=\{(1:0:0:0:0),(0:1:0:0:0),(0:0:1:0:0)\}\cup W$, where $W:x_1=x_2=x_3=F=0$ a set of five points. Therefore, $e(X^H)=8$.
\end{proof}

\begin{lemma}\label{lem:c3c3}
Suppose $H<G$ and $H\cong C_3\ti C_3$. Then $e(X^H)=4$.
\end{lemma}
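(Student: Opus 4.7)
The plan is to mimic the argument of Lemma \ref{lem:c2c2}. By Theorem \ref{thm:lift}, $H$ admits an $F$-lifting $\widetilde{H}<\GL(5,\C)$. Since $\widetilde{H}$ is a finite abelian subgroup of $\GL(5,\C)$, it is simultaneously diagonalizable. After a suitable linear change of coordinates every element of $\widetilde{H}$ acts diagonally, so each $x_i$ carries a character $\chi_i\in\widehat{H}\cong(\Z/3)^2$, and the $H$-fixed locus in $\P^4$ is the disjoint union $\bigsqcup_{\chi}\P(V_\chi)$, where $V_\chi$ is the span of the coordinate lines of character $\chi$.

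The main task is to classify the $5$-tuples $(\chi_1,\dots,\chi_5)$. By Lemma \ref{lem:c3}, each nontrivial $\widetilde{h}\in\widetilde{H}$ has eigenvalue multiset either $\{1,1,1,\xi_3,\xi_3^2\}$ (``type I'', trace $2$) or $\{1,\xi_3,\xi_3,\xi_3^2,\xi_3^2\}$ (``type II'', trace $-1$). The $8$ nontrivial elements of $H$ split into $4$ pairs $\{h,h^{-1}\}$, one per index-$3$ subgroup of $\widehat{H}$, and the members of a pair share a type; hence the number of type-I elements is even. A direct sum-of-traces calculation, combined with the determinant-$1$ condition $\sum_i\chi_i=0$ in $\widehat{H}$, then forces this number to be exactly $4$ and yields $n_0=1$, where $n_\alpha$ denotes the multiplicity of $\alpha$ among the $\chi_i$. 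A short case analysis of how each of the two ``type-I'' subgroups distributes its two nontrivial coordinate characters shows that the two characters in each such subgroup must form a pair $\{\alpha,-\alpha\}$; otherwise their sum is a nonzero element of that subgroup and cannot be cancelled by the contribution of the other ``type-I'' subgroup, since the two index-$3$ subgroups intersect only in $0$. Up to an automorphism of $\widehat{H}$ the unique valid configuration is therefore
\[
\chi_1=0,\qquad\{\chi_2,\chi_3,\chi_4,\chi_5\}=\{\alpha,-\alpha,\beta,-\beta\},
\]
for some basis $\{\alpha,\beta\}$ of $\widehat{H}$.

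In this configuration each character class contains at most one coordinate, so the $H$-fixed locus in $\P^4$ is the five coordinate points $e_1,\dots,e_5$. For $i\geq 2$ the monomial $x_i^5$ has nonzero character $2\chi_i$, hence is not $\widetilde{H}$-invariant, so its coefficient in $F$ is zero and $F(e_i)=0$, i.e., $e_i\in X$. For $e_1$, the only $\widetilde{H}$-invariant degree-$5$ monomials with nonvanishing value of $F(e_1)$ or of some $\partial F/\partial x_j(e_1)$ are $x_1^5$ (for $F$ and $\partial F/\partial x_1$) and the monomials $x_1^4 x_j$ (for $\partial F/\partial x_j$ with $j\neq 1$); but $x_1^4 x_j$ has character $\chi_j\neq 0$ when $j\neq 1$, so none of them lies in $F$. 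Hence if $e_1\in X$ then $F$ and every partial $\partial F/\partial x_j$ vanish at $e_1$, contradicting smoothness of $X$. Therefore $e_1\notin X$, and $X^H=\{e_2,e_3,e_4,e_5\}$, giving $e(X^H)=4$. The main obstacle is the character-theoretic classification in the middle step; once the unique configuration is pinned down, the intersection with $X$ follows from the same smoothness argument used in earlier lemmas.
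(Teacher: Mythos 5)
Your proof is correct and lands exactly where the paper's does: the lifting is normalized to a diagonal form with coordinate characters $(0,\alpha,-\alpha,\beta,-\beta)$ (the paper's $\Diag(\xi_3,\xi_3^2,1,1,1)$, $\Diag(1,1,\xi_3,\xi_3^2,1)$), the $H$-fixed locus in $\P^4$ is the five coordinate points, and the point carrying the trivial character is excluded from $X$ by smoothness. The one substantive difference is how the normal form is justified: the paper simply imports it from the proof of \cite[Lemma 5.4]{OY15}, whereas you re-derive it internally from Lemma \ref{lem:c3} by averaging traces over $H$ (which gives $9n_0=3k-3$, and with $k$ even forces $n_0=1$, $k=4$ type-I elements) and then using $\sum_i\chi_i=0$ together with the fact that two distinct order-$3$ subgroups of $\widehat{H}$ meet only in $0$ to pin down the pairs $\{\alpha,-\alpha\}$ and $\{\beta,-\beta\}$. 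I checked this classification step and it is sound; it buys a self-contained argument at the cost of length. Your closing smoothness argument at the trivial-character point (the coefficient of that variable's fifth power must be nonzero, so the point misses $X$, while the other four fifth powers are non-invariant and hence absent from $F$) is also the correct, if unstated, reason the paper's $X^H$ has four points rather than five.
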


\begin{proof}
By Theorem \ref{thm:lift} $H$ has an $F$-lifting, say $\wt{H}$. By the proof of \cite[Lemma 5.4]{OY15}, we may assume $\wt{H}=\la A_1,A_2\ra$, where $A_1=\Diag(\xi_3,\xi_3^2,1,1,1)$, and $A_2=\Diag(1,1,\xi_3,\xi_3^2,1)$. Thus $X^H=\{(1:0:0:0:0),(0:1:0:0:0),(0:0:1:0:0),(0:0:0:1:0)\}$ and  $e(X^H)=4$.
\end{proof}

\begin{lemma}\label{lem:c5c5}
Suppose $H<G$ and $H\cong C_5\ti C_5$. Then one of the following three cases is true:

(1) $H$ is not $F$-liftable, $X^H=\emptyset$, $e(X^H)=0$, or

(2)  $H$ is  $F$-liftable, $X^H=\emptyset$, $e(X^H)=0$, or

(3) $H$ is  $F$-liftable, $X^H=$ a set of five points, $e(X^H)=5$.
\end{lemma}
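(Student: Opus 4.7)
The plan is to split on whether $H$ is $F$-liftable. If $H$ is not $F$-liftable, choose $F$-lifts $A_1, A_2 \in \SL(5,\C)$ of order $5$ of a pair of generators of $H$; such lifts exist by Lemma \ref{lem:c5}. Since $A_1$ and $A_2$ commute modulo scalars, the commutator $[A_1, A_2]$ equals $\xi_5^k I$ for some $k$, and $k = 0$ would give $\la A_1, A_2\ra$ as an $F$-lifting of $H$, contradicting the assumption. Hence $\wt H := \la A_1, A_2\ra$ is a Heisenberg group of order $125$. Its only faithful complex irreducible representations have dimension $5$, and since the action of $\wt H$ on $\C^5$ is faithful and non-abelian, it must be this (Schr\"odinger) irreducible representation. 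Consequently $\wt H$ has no common eigenvector in $\C^5$, so $(\P^4)^H = \emptyset$ and $X^H = \emptyset$, giving case (1).

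Now suppose $H$ is $F$-liftable, with $F$-lifting $\wt H = \la A_1, A_2\ra \subset \SL(5,\C)$. Being abelian, $\wt H$ can be simultaneously diagonalized, so in suitable coordinates $A_1 = \Diag(\xi_5^{a_1}, \ldots, \xi_5^{a_5})$ and $A_2 = \Diag(\xi_5^{b_1}, \ldots, \xi_5^{b_5})$; set $v_i := (a_i, b_i) \in (\Z/5)^2$. The conditions $\Det(A_1) = \Det(A_2) = 1$ give $\sum_i v_i \equiv 0 \pmod 5$, and the projective faithfulness of $\wt H$ is equivalent to the differences $\{v_j - v_1\}_{j=2}^5$ spanning $(\Z/5)^2$. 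A short linear-algebra check over $\Z/5$, combining the sum-zero relation with this span requirement, shows that the multiset $\{v_1, \ldots, v_5\}$ either consists of five distinct entries, or has exactly one coincident pair with three other distinct entries. For example, if three of the $v_i$ coincided, the remaining two differences would sum to zero and hence be proportional, ruling out rank two; two disjoint coincident pairs lead similarly to proportional differences.

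If all $v_i$ are distinct, then $(\P^4)^{\wt H} = \{e_1, \ldots, e_5\}$. For each $i$, the condition $e_i \in X$ forces the coefficient of $x_i^5$ in $F$ to vanish, and smoothness of $X$ at $e_i$ then demands some monomial $x_i^4 x_j$ to appear in $F$; $H$-invariance of this monomial requires $v_j = v_i$, forcing $j = i$ and contradicting the vanishing. Hence $e_i \notin X$ for every $i$ and $X^H = \emptyset$, yielding case (2). If exactly one pair coincides, say $v_1 = v_2$, then $(\P^4)^{\wt H} = L \cup \{e_3, e_4, e_5\}$ with $L = \{x_3 = x_4 = x_5 = 0\}$. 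The same smoothness argument rules out $e_j \in X$ for $j = 3, 4, 5$ and also $L \subset X$ (else all partials $\partial_j F$ would vanish identically along $L$ by $H$-invariance). Thus $X^H = X \cap L$ is a length-$5$ zero-dimensional scheme by B\'ezout, consisting of five distinct reduced points by smoothness of $X$, giving case (3). The main obstacle is the coincidence-pattern classification in the $F$-liftable case; once that is pinned down, the Heisenberg irreducibility argument and the smoothness/B\'ezout arguments are routine.
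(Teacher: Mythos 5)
Your proof is correct, and it is considerably more self-contained than the paper's, which disposes of the non-liftable case by citing \cite[Lemma 4.14]{OY15} and of the liftable case with the phrase ``by an easy computation.'' Your replacement for the citation --- lifting two generators to order-$5$ matrices via Lemma \ref{lem:c5}, observing that non-liftability forces $[A_1,A_2]=\xi_5^kI$ with $k\neq 0$, and concluding that there is no common eigenvector --- is exactly the right argument (and in fact you do not even need the full irreducibility of the Heisenberg representation: a common eigenvector $v$ of $A_1,A_2$ would satisfy $[A_1,A_2]v=v$, contradicting $[A_1,A_2]=\xi_5^kI$ directly). Your weight-vector analysis in the liftable case, using $\sum_i v_i\equiv 0$ from $\Det=1$ (via Lemma \ref{lem:mukailemma}) together with the spanning condition from projective faithfulness to show the coincidence pattern is ``all distinct'' or ``exactly one pair,'' is the content of the paper's ``easy computation,'' and your check that two disjoint pairs are impossible (since $t-w=-2(u-w)$) is right. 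One point deserves an explicit sentence: in case (3), the reducedness of $X\cap L$ does not follow from smoothness of $X$ alone, but from smoothness combined with the same invariance observation you use to rule out $L\subset X$ --- namely that $\p_jF$ vanishes identically on $L$ for $j\geq 3$ (no monomial $x_j\cdot(\text{quartic in }x_1,x_2)$ is invariant), so a multiple root of the binary quintic $F|_L$, where $\p_1F$ and $\p_2F$ also vanish, would be a singular point of $X$. With that sentence added, the argument is complete.
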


\begin{proof}

If $H$ is not $F$-liftable, then by \cite[Lemma 4.14]{OY15} and Lemma \ref{lem:c5} we have $X^H=\emptyset$.  If $H$ is $F$-liftable, then by an easy computation $X^H$ is either an empty set or a set of five points.
\end{proof}

If $H<G$ and $H\cong C_4\ti C_2$, $C_{15}\ti C_3$, or $C_{15}\ti C_5$, we can determine the fixed locus $X^H$ in the same way as above. We summarize the results in Table \ref{tab:pairs}.

\begin{table}[htdp]
\caption{Fixed loci of pairs of commutating Gorenstein automorphisms of quintics}\label{tab:pairs}

\begin{center}
{\footnotesize

\begin{tabular}{|c|c|c|}

\hline 
the group generated by $g$ and $h$ &$X^g\cap X^h$  & $e(X^g\cap X^h)$\\
\hline 
 $C_2\ti C_2$ & eight points  & 8 \\
\hline 
$C_4\ti C_2$ & eight points  & 8 \\
\hline 
  $C_3\ti C_3$ & four points  & 4 \\
\hline

 $C_5\ti C_5$ &$\emptyset$ & 0 \\
\hline 

 $C_5\ti C_5$ & five points  & 5 \\
\hline 

$C_{15}\ti C_3$ & four points  & 4 \\
\hline 
$C_{15}\ti C_5$ & two points  & 2 \\
\hline 
\end{tabular}
}

\end{center}
\label{K3inCICY}

\vspace{3mm}
\begin{remark}
In this table, $g,h\in G$ and $gh=hg$. Note that if the group generated by $g$ and $h$ is a cyclic group, then both $X^g\cap X^h$ and $e(X^g\cap X^h)$ can be determined by Table \ref{tab:g}.
\end{remark}

\end{table}

\begin{theorem}\label{thm:main}
Let $a$ and $b$ be integers. If the pair of integers $(a,b)$ is in the following list, then there exists a smooth quintic threefold $X$ and a Gorenstein subgroup $G$ of $\Aut(X)$ such that  the pair of Hodge numbers $(h^{1,1}(\widehat{X/G}), h^{2,1}(\widehat{X/G}))$ of a crepant resolution $\widehat{X/G}$ of the quotient $X/G$ is the same as $(a, b)$: 

$(1,101)$, $(101,1)$, $(1,21)$, $(21,1)$, $(1,5)$, $(5,1)$, $(3,59)$, $(59,3)$, $(3,19)$, $(19,3)$, $(5,49)$, $(49,5)$, $(5,33)$, $(33,5)$, $(5,25)$, $(25,5)$, $(11,27)$, $(27,11)$, $(11,19)$, $(19,11)$, $(11,15)$, $(15,11)$, $(13,17)$, $(17,13)$, $(17,21)$, $(21,17)$, $(13,13)$, $(5,13)$, $(7,47)$, $(7,35)$, $(7,27)$, $(9,29)$, $(9,41)$, $(11,3)$, $(15,3)$, $(17,9)$, $(19,21)$, $(19,7)$, $(21,5)$, $(23,19)$, $(23,15)$, $(29,5)$, $(41,1)$.
\end{theorem}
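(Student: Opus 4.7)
The plan is to prove Theorem~\ref{thm:main} by exhibiting, for each of the $43$ listed pairs $(a,b)$, an explicit pair $(X,G)$ where $X$ is a smooth quintic threefold and $G<\Aut(X)$ is a Gorenstein subgroup with $(h^{1,1}(\widehat{X/G}), h^{2,1}(\widehat{X/G}))=(a,b)$. These witnesses are to be collected in Table~\ref{tab:main}, with the defining equations $F$ and the generators of $G$ drawn from the classification of Gorenstein subgroups of $\Aut(X)$ in \cite{OY15}; in particular, the boundary cases $(1,101)$ (trivial $G$) and $(101,1)$ (the $\mu_5^3$ action on the Dwork pencil) appear as in the introduction.

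Given any candidate $(X,G)$ on the list, all Hodge numbers of $\widehat{X/G}$ are obtained from Theorem~\ref{thm:mckay} combined with Tables~\ref{tab:g} and~\ref{tab:pairs}. The orbifold Euler number is evaluated from \eqref{eq:e(X,G)2}: one enumerates the conjugacy classes $\{g\}$ of $G$, and for each $g$ runs through the centralizer $C(g)$; whenever $\la g,h\ra$ is cyclic, $e(X^g\cap X^h)$ is read off Table~\ref{tab:g} (using the trace or multiplicity invariants of a generator), and whenever $\la g,h\ra$ is one of the non-cyclic abelian groups $C_2\ti C_2,\,C_4\ti C_2,\,C_3\ti C_3,\,C_5\ti C_5,\,C_{15}\ti C_3,\,C_{15}\ti C_5$ it is read off Table~\ref{tab:pairs}. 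The orbifold Hodge number $h^{1,1}(X,G)$ is assembled from \eqref{eq:h(X,G)} with $(p,q)=(1,1)$ using the last column of Table~\ref{tab:g}, and then
\[
h^{2,1}(\widehat{X/G}) \;=\; h^{1,1}(\widehat{X/G}) \;-\; \tfrac{1}{2}\, e(\widehat{X/G})
\]
is forced by the Calabi-Yau identity $e=2(h^{1,1}-h^{2,1})$, which applies because $\widehat{X/G}$ has finite fundamental group (Theorem~\ref{thm:pi1}) and hence $h^{1,0}=h^{2,0}=0$.

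The principal obstacle is twofold. First, one must actually produce pairs $(X,G)$ realizing each of the $43$ targets: this requires sifting through \cite{OY15} for concrete invariant polynomials $F$ admitting the prescribed symmetry groups, and assembling Table~\ref{tab:main} is the genuinely non-routine part of the proof. Second, four rows of Table~\ref{tab:g} carry the caveat ``depending on $C(g)$'' (the entries with $\Ord(g)\in\{4,5,13,15\}$ whose fixed locus is a set of isolated points sharing a common fermion shift): for each such $g$ in each chosen $G$, one must determine the permutation representation of $C(g)$ on these points in order to compute the $C(g)$-invariant subspace of $H^{0,0}(X_1^g)$, and hence $h_g^{1,1}(X,G)$. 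This can be done directly from the matrix forms of the generators of $G$ together with the explicit eigendata of the relevant $F$-liftings produced in Lemmas~\ref{lem:c2}--\ref{lem:c10}. Once these local computations are performed case by case, the aggregation via \eqref{eq:h(X,G)} and \eqref{eq:e(X,G)2} is mechanical and yields the $43$ pairs.
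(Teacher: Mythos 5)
Your proposal matches the paper's proof in both structure and substance: witnesses $(X,G)$ are taken from Table~\ref{tab:main}, the orbifold invariants are computed via \eqref{eq:e(X,G)2} and \eqref{eq:h(X,G)} from Tables~\ref{tab:g} and~\ref{tab:pairs} (with the ``depending on $C(g)$'' entries handled by counting $C(g)$-orbits on the fixed points), and $h^{2,1}$ is recovered from the Calabi--Yau identity $e=2(h^{1,1}-h^{2,1})$. The paper simply carries out this exact procedure explicitly for the two cases $(15,11)$ and $(5,1)$ and leaves the rest to the reader, so your outline is correct and essentially identical in approach.
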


\begin{remark}\label{rm:main}
Based on the main results of \cite{OY15}, it can be shown that the  list in Theorem \ref{thm:main} is in fact complete (i.e., there are no other pairs of integers except the above $43$ pairs appearing as the pair of Hodge numbers $(h^{1,1}(\widehat{X/G}), h^{2,1}(\widehat{X/G}))$).

Some of the pairs in the list in Theorem \ref{thm:main} represent ``new'' Calabi-Yau threefolds (here we refer to the database \cite{Ju} of the known Calabi-Yau threefolds). See Table \ref{tab:main} for more details.
\end{remark}

\begin{proof}
The proof is based on Table \ref{tab:main}. We first show two cases (i.e., $(a,b)=(15,11)$,  $(5,1)$) in details.

Case $(a,b)=(15,11)$.  As in Table \ref{tab:main}, let $X: F=x_1^4x_2+x_2^5+x_3^4x_4+x_4^5+x_5^5=0$  and let $G$ be the subgroup of $\PGL(5,\mathbb{C})$ generated by the following three matrices: $A_1=\Diag(-1,1,-1,1,1)$, $A_2=\Diag(\xi_5,\xi_5,\xi_5^4,\xi_5^4,1)$, $A_3=
    \begin{pmatrix} 
    0&0&1&0&0 \\ 
    0&0&0&1&0 \\ 
    1&0&0&0&0 \\ 
    0&1&0&0&0 \\ 
    0&0&0&0&1 \\ 
    \end{pmatrix}$. Then $G\cong D_{20}$ and $G$ acts on $X$ Gorensteinly. Let $\wt{G}:=\la A_1,A_2,A_3\ra <\GL(5,\C)$. Then $\wt{G}$ is an $F$-lifting of $G$. The group $G$ has exactly eight conjugacy classes and by Table \ref{tab:g} we obtain Table \ref{tab:D20}.
    
    \begin{table}[htdp]
\caption{Conjugacy classes of Gorenstein subgroup $G\cong D_{20}$ of $\Aut(X)$}\label{tab:D20}
\begin{center}
{\footnotesize

\begin{tabular}{|c| c|c|c|c|c|c|c|c|}

\hline 
$\{g\}$  & $\{[I_5]\}$ & $\{[A_1A_3]\}$&$\{[A_1]\}$& $\{[A_3]\}$&$\{[A_2]\}$&$\{[A_2^2]\}$&$\{[A_1A_2]\}$&$\{[A_1A_2^2]\}$ \\
\hline 
 \Ord(g) & 1  &2&2&  2&5&5&10&10\\
\hline 
 $h^{1,1}_g(X,G)$ &1  &2&2&  2&$\alpha_1$&$\alpha_2$&1&1\\
\hline
 $e(X^g)$ &-200  &-8& -8&-8&10&10&2&2\\
\hline
 $C(g)$ &$D_{20}$   & $C_2\ti C_2$ & $D_{20}$&$C_2\ti C_2$&$C_{10}$&$C_{10}$&$C_{10}$&$C_{10}$\\
\hline 
 \#(\{g\}) & 1  &5&1&  5&2&2&2&2\\
\hline

\end{tabular}
}
\end{center}
%\vspace{3mm}
%\begin{remark}
%In this table, $g$ is an element in $G$, $A$ is an $F$-lifting of $g$, and we denote by $C$ some smooth plane curve of degree 5.
%\end{remark}
\end{table}

By formula (\ref{eq:e(X,G)2}), Tables \ref{tab:D20} and  \ref{tab:pairs}, $$e(X,G)=\frac{1}{20}( \sum_{h\in G}e(X^h)+5\sum_{h\in C([A_1A_3])}e(X^{[A_1A_3]}\cap X^h)
+$$ $$ \sum_{h\in C([A_1])}e(X^{[A_1]}\cap X^h)+5\sum_{h\in C([A_3])}e(X^{[A_3]}\cap X^h)+2 \sum_{h\in C([A_2])}e(X^{[A_2]}\cap X^h)+$$ $$2 \sum_{h\in C([A_2^2])}e(X^{[A_2^2]}\cap X^h)+2 \sum_{h\in C([A_1A_2])}e(X^{[A_1A_2]}\cap X^h)+2 \sum_{h\in C([A_1A_2^2])}e(X^{[A_1A_2^2]}\cap X^h))$$ $$=\frac{1}{20}(-240+0+80+0+120+120+40+40)=8.$$ 

In order to compute $h^{1,1}(X,G)$, we need to compute $\alpha_1$ and $\alpha_2$. Let $g=[A_2]\in G$. The fixed locus $X^{g}$ consists of ten points. Moreover, by computing the fermion shift numbers at those points, we obtain $X^{g}_0=\emptyset$, $X^{g}_1=\{(0:0:1:0:0), (0:0:\xi_8:1:0),  (0:0:\xi_8^3:1:0),  (0:0:\xi_8^5:1:0),  (0:0:\xi_8^7:1:0)\}$,  and $X^{g}_2=\{(1:0:0:0:0), (\xi_8:1:0:0:0),  (\xi_8^3:1:0:0:0),  (\xi_8^5:1:0:0:0),  (\xi_8^7:1:0:0:0)\}$. Note that the centralizer group $C([A_2])=\la [A_1], [A_2]\ra$. Then  the set $X^g_1$ is naturally acted on by  $C([A_2])$ and the number of orbits of this group action is $3$. Thus, by formula (\ref{eq:hg}), $\alpha_1=3$. Similarly, $\alpha_2=3$. Therefore, by formula (\ref{eq:h(X,G)}), $$h^{1,1}(X,G)=1+2+2+2+3+3+1+1=15.$$

By Theorem \ref{thm:mckay}, $e(\widehat{X/G})=e(X,\; G)=8$, $h^{1,1}(\widehat{X/G})=h^{1,1}(X,G)=15$. Since $\widehat{X/G}$ is a Calabi-Yau threefold, it follows that $h^{2,1}(\widehat{X/G})=h^{1,1}(\widehat{X/G})-\frac{1}{2}e(\widehat{X/G})=15-4=11$. Therefore, the case $(a,b)=(15,11)$ is proved.

Case $(a,b)=(5,1)$. As in Table \ref{tab:main}, let $X: F=x_1^5+x_2^5+x_3^5+x_4^5+x_5^5=0$  and let $G$ be the subgroup of $\PGL(5,\mathbb{C})$ generated by the following two matrices: $A_1=\Diag(1,\xi_5,\xi_5^4,\xi_5^4,\xi_5)$, $A_2=
    \begin{pmatrix} 
    0&1&0&0&0 \\ 
    0&0&1&0&0 \\ 
    0&0&0&1&0 \\ 
    0&0&0&0&1 \\ 
    1&0&0&0&0 \\ 
    \end{pmatrix}$. Then $G\cong (C_5\ti C_5)\rt C_5$. Note that the order of an element of $G$ is either $1$ or $5$. Moreover,  $G$ has $29$ conjugacy classes: $\{[I_5]\}$, $\{[A_1]\}$, $\{[A_1^2]\}$, $\{[A_1^3]\}$, $\{[A_1^4]\}$, and $24$ other conjugacy classes which are represented by fixed point free automorphisms of $X$.  Then by Table \ref{tab:g} we obtain Table \ref{tab:125}.
    
      \begin{table}[htdp]
\caption{Conjugacy classes of Gorenstein subgroup $G\cong (C_5\ti C_5)\rt C_5$ of $\Aut(X)$}\label{tab:125}
\begin{center}
{\footnotesize

\begin{tabular}{|c| c|c|c|c|c|c|}

\hline 
$\{g\}$  & $\{[I_5]\}$ & $\{[A_1]\}$&$\{[A_1^2]\}$& $\{[A_1^3]\}$&$\{[A_1^4]\}$& 24 others\\
\hline 
 \Ord(g) & 1  &5&5&  5&5&5\\
\hline 
 $h^{1,1}_g(X,G)$ &1  &$\beta_1$&$\beta_2$&  $\beta_3$&$\beta_4$&0\\
\hline
 $e(X^g)$ &-200  &10& 10&10&10&0\\
\hline
 $C(g)$ &$(C_5\ti C_5)\rt C_5$   & $C_5\ti C_5$ & $C_5\ti C_5$&$C_5\ti C_5$&$C_5\ti C_5$& \\
\hline 
 \#(\{g\}) & 1  &5&5&  5&5& \\
\hline

\end{tabular}
}
\end{center}
%\vspace{3mm}
%\begin{remark}
%In this table, $g$ is an element in $G$, $A$ is an $F$-lifting of $g$, and we denote by $C$ some smooth plane curve of degree 5.
%\end{remark}
\end{table}

Let $H=\la [A_1], [A_3]\ra<G$, where $A_3=\Diag(1,\xi_5,\xi_5^2,\xi_5^3,\xi_5^4)$.  Note that $C([A_1])=H$ and $e(X^{[A_1]}\cap X^{[A_3]})=e(X^{[A_3]})=0$. Let $g=[A_1]\in G$. Thus, by formula (\ref{eq:e(X,G)2}) and Table \ref{tab:125} $$e(X,G)=\frac{1}{125}(\sum_{h\in G}e(X^h)+\sum_{i=1}^{4}(5\sum_{h\in H}e(X^{g^i}\cap X^h)))=\frac{1}{125}(0+4\cdot 5\cdot 50)=8.$$ Similar to the previous case, by an easy computation, $\beta_1=\beta_2=\beta_3=\beta_4=1$. Therefore, $$h^{1,1}(X,G)=1+1+1+1+1=5.$$ Then by Theorem \ref{thm:mckay} $e(\widehat{X/G})=8$, $h^{1,1}(\widehat{X/G})=5$, and $h^{2,1}(\widehat{X/G})=1$. Notice that $H$ is in fact the normal subgroup of $G$ generated by those elements which have fixed points. Then by Theorem \ref{thm:pi1} $$\pi_1(\widehat{X/G})\cong G/H\cong C_5.$$

So far we have proved only two cases of Theorem \ref{thm:main}. However, the other cases can be proved in the same way (i.e., firstly we use Table \ref{tab:main} to find suitable $X$ and $G$; secondly we analyze conjugacy classes of $G$ and use Tables \ref{tab:g} and \ref{tab:pairs} to obtain tables like Tables \ref{tab:D20} and \ref{tab:125}; lastly we use formulas (\ref{eq:h(X,G)} ), (\ref{eq:e(X,G)2}) and Theorem \ref{thm:mckay} to compute the numbers which we want to know). Thus, we may leave the detailed proofs for the readers. 
\end{proof}

\begin{remark}
It is possible to determine the Hodge numbers of $\widehat{X/G}$ by first computing the representation of $G$ on the  cohomology of $X$ (cf. \cite{St12}). However, our method of determining those numbers are based on Tables \ref{tab:g}, \ref{tab:pairs} and formulas (\ref{eq:h(X,G)}),  (\ref{eq:e(X,G)2}) and hence such computation is not required. 
\end{remark}

\appendix

\section{The table}

In Table \ref{tab:main}, we give many examples of a pair $(X,\;G)$, where $X$ is a smooth quintic threefold and $G$ is a Gorenstein subgroup of $\Aut(X)$. We compute the Hodge numbers $(h^{1,1},\;h^{2,1})$ ($:=(h^{1,1}(\widehat{X/G}), \;h^{2,1}(\widehat{X/G}))$) of a crepant resolution $\widehat{X/G}$ of the quotient $X/G$. In the last column  ``new'' means that the pair $(h^{1,1},\;h^{2,1})$ is not contained in the list \cite{Ju} of Hodge numbers of known Calabi-Yau threefolds.

We need to explain some notations in Table \ref{tab:main}. In the second column, for simplicity, we use  elements in the symmetric group $S_5$ to represent the corresponding permutation matrices. For examples, $(12)(34)$ means the matrix $\begin{pmatrix} 
    0&1&0&0&0 \\ 
    1&0&0&0&0 \\ 
    0&0&0&1&0 \\ 
    0&0&1&0&0 \\ 
    0&0&0&0&1 \\ 
    \end{pmatrix}$, $(123)$ means the matrix $\begin{pmatrix} 
    0&1&0&0&0 \\ 
    0&0&1&0&0 \\ 
    1&0&0&0&0 \\ 
    0&0&0&1&0 \\ 
    0&0&0&0&1 \\ 
    \end{pmatrix}$, and so on.  In the third column, for $1\leq i\leq 22$, we use $X_i$ to denote the smooth quintic threefold in the $i$-th example of \cite[Example 2.1]{OY15}, and the matrices $A_{i,j}$ in the second column correspond to the matrices ``$A_j$'' in the same example of \cite[Example 2.1]{OY15}. For example, $X_{17}\subset \P^4$ is defined by $F=((x_1^4+x_2^4)+(2+4\xi_3^2)x_1^2x_2^2)x_3+(-(x_1^4+x_2^4)+(2+4\xi_3^2)x_1^2x_2^2)x_4+x_3^4x_4+x_4^4x_3+x_5^5=0$, and $A_{17,2}= \begin{pmatrix} 
    \xi_8^3&0&0&0&0 \\ 
    0&\xi_8&0&0&0 \\ 
    0&0&0&1&0 \\ 
    0&0&1&0&0 \\ 
    0&0&0&0&1 \\ 
    \end{pmatrix}$. We define two more smooth quintic threefolds $X_{C_4\ti C_2}: x_1^4x_4+x_2^4x_5+x_3^4x_4+x_4^5+x_5^5+x_1x_2x_3^3=0$ and $X_{C_8}:x_1^4x_2+x_2^4x_3+x_3^5+x_4^4x_2+x_5^4x_3+x_1x_4x_5^3+x_2^3x_5^2+x_1x_3x_4^3+x_1^3x_3x_4=0$. We leave  the sixth column blank if and only if the fundamental group $\pi_1(\widehat{X/G})$ is trivial.

\begin{center}
{\small
\begin{longtable}{| p{1.8cm}| p{6.6cm}| p{1.1cm}| p{1cm}|p{1.5cm}| p{1.2cm}|p{1.5cm}|}
\caption{Hodge numbers of $\widehat{X/G}$} \label{tab:main} \\
\hline 
 $G$ &generators of $G$ (as a subgroup of \PGL(5,C)) &$X$ & $e(\widehat{X/G})$ &$(h^{1,1},\;h^{2,1})$ 
&$\pi_1(\widehat{X/G})$& remarks on $(h^{1,1},\;h^{2,1})$\\
\hline 
\endfirsthead
\multicolumn{4}{c}%
{\tablename\ \thetable\ -- \textit{Continued from previous page}} \vspace{3.6mm}\\
\hline
 $G$ &generators of $G$ (as a subgroup of \PGL(5,C)) &$X$  & $e(\widehat{X/G})$ &$(h^{1,1},\;h^{2,1})$ 
&$\pi_1(\widehat{X/G})$& remarks on $(h^{1,1},\;h^{2,1})$ \\
\hline 
\endhead
\multicolumn{4}{r}{\textit{Continued on next page}} \\
\endfoot
\hline
\endlastfoot

trivial &  trivial &any & $-200$ &(1,101)&  &\\
\hline
$C_2$&  $(12)(34)$ &$X_1$   &$-112$  & $(3,59)$ &  &\\
\hline
$C_3$&  $(123)$ &$X_1$   &$-88$  & $(5,49)$ &  &\\
\hline
$C_3$&  $\Diag(\xi_3,\xi_3^2,\xi_3,\xi_3^2,1)$ &$X_{14}$   &$-56$  & $(5,33)$ &  &\\
\hline
$C_4$&  $\Diag(\xi_4,1,\xi_4^3,1,1)$ &$X_{3}$   &$-80$  & $(7,47)$ &  &\\
\hline 
$C_4$&  $\Diag(\xi_4,\xi_4,-1,1,1)$ &$X_{C_4\ti C_2}$   &$-32$  & $(11,27)$ &  &\\
\hline 
$C_2\ti C_2$&  $(12)(34)$, $(13)(24)$ &$X_{1}$   &$-56$  & $(7,35)$ &  &\\
\hline 
$C_5$&  $\Diag(1,\xi_5,\xi_5^4,1,1)$ &$X_{1}$   &$-88$  & $(5,49)$ &  &\\
\hline 
$C_5$&  $\Diag(\xi_5,\xi_5,\xi_5^4,\xi_5^4,1)$ &$X_{1}$   &$8$  & $(21,17)$ &  &\\
\hline 
$C_5$&  $\Diag(1,\xi_5,\xi_5^2,\xi_5^3,\xi_5^4)$ &$X_{1}$   &$-40$  & $(1,21)$ & $C_5$ &\\
\hline 
$S_3$&  $\Diag(\xi_3,\xi_3^2,1,1,1)$, $(12)(34)$ &$X_{14}$   &$-56$  & $(5,33)$ &  &\\
\hline 
$S_3$&  $\Diag(\xi_3,\xi_3^2,\xi_3,\xi_3^2,1)$,$(12)(34)$ &$X_{14}$   &$-40$  & $(5,25)$ &  &\\
\hline
$C_6$&  $\Diag(\xi_3,\xi_3^2,\xi_3,\xi_3^2,1)$,$(13)(24)$ &$X_{14}$   &$-16$  & $(11,19)$ &  &\\
\hline
$C_8$&  $A_{17,2}$&$X_{17}$   &$8$  & $(23,19)$ &  &\\
\hline
$C_8$&  $\Diag(\xi_8,-1,1,\xi_8^5,\xi_4^3)$ &$X_{C_8}$   &$8$  & $(17,13)$ &  &\\
\hline
$C_4\ti C_2$&  $\Diag(\xi_4,\xi_4,-1,1,1)$, $\Diag(1,-1,-1,1,1)$ &$X_{C_4\ti C_2}$   &$-4$  & $(19,21)$ & &\\
\hline
$D_8$&  $(12)(34)$, $\Diag(1,1,\xi_4,\xi_4^3,1)$ &$X_{20}$   &$-40$  & $(9,29)$ &  &\\
\hline
$Q_8$&  $A_{18,1}$, $\Diag(\xi_4^3,\xi_4,1,1,1)$ &$X_{18}$   &$-64$  & $(9,41)$ &  &\\
\hline
$C_3\ti C_3$&  $\Diag(\xi_3,\xi_3^2,1,1,1)$, $\Diag(1,1,\xi_3,\xi_3^2,1)$ &$X_{14}$   &$-8$  & $(17,21)$ &  &\\
\hline 
$D_{10}$&  $(12)(34)$,$\Diag(\xi_5,\xi_5^4,\xi_5,\xi_5^4,1)$ &$X_{1}$   &$-8$  & $(13,17)$ &  &\\
\hline
$D_{10}$&  $(12)(34)$, $\Diag(\xi_5,\xi_5^4,1,1,1)$ &$X_{1}$   &$-56$  & $(5,33)$ &  &\\
\hline
$D_{10}$&  $(12)(34)$,$\Diag(\xi_5,\xi_5^4,\xi_5^2,\xi_5^3,1)$ &$X_{1}$   &$-32$  & $(3,19)$ &  &\\
\hline 
$C_{10}$& $\Diag(-1,1,-\xi_5,\xi_5,\xi_5^3)$ &$X_{14}$   &$16$  & $(19,11)$ &  &\\
\hline
$C_{12}$&  $\Diag(\xi_3,\xi_3^2,\xi_3,\xi_3^2,1)$, $\Diag(1,1,\xi_4,\xi_4^3,1)$ &$X_{19}$   &$16$  & $(23,15)$ &  &\\
\hline
$A_{4}$&  $(12)(34)$, $(13)(24)$, $(123)$ &$X_{1}$   &$-40$  & $(7,27)$ &  &\\
\hline
$D_{12}$&  $\Diag(\xi_3,\xi_3^2,\xi_3,\xi_3^2,1)$,$(13)(24)$,$(12)(34)$ &$X_{14}$   &$-8$  & $(11,15)$ &  &\\
\hline
$C_{13}$&  $\Diag(\xi_{13},\xi_{13}^9,\xi_{13}^3,1,1)$ &$X_{16}$   &$88$  & $(49,5)$ &  &\\
\hline
$C_{15}$&  $\Diag(\xi_3,\xi_3^2,1,1,1)$, $\Diag(1,1,\xi_5,\xi_5,\xi_5^3)$ &$X_{14}$   &$88$  & $(49,5)$ &  &\\
\hline
$C_{15}$&  $\Diag(\xi_3,\xi_3^2,1,1,1)$, $\Diag(1,1,\xi_5,1,\xi_5^4)$ &$X_{5}$   &$-8$  & $(17,21)$ &  &\\
\hline
$C_{15}$&  $\Diag(\xi_3,\xi_3^2,\xi_3,\xi_3^2,1)$,$\Diag(1,1,\xi_5,\xi_5,\xi_5^3)$ &$X_{14}$   &$56$  & $(33,5)$ &  &\\
\hline
$QD_{16}$&  $A_{17,1}$, $A_{17,2}$ &$X_{17}$   &$-8$  & $(17,21)$ &  &\\
\hline
$C_{17}$&  $\Diag(\xi_{17},\xi_{17}^{-4},\xi_{17}^{16},\xi_{17}^4,1)$ &$X_{13}$   &$56$  & $(33,5)$ &  &\\
\hline
$C_{3}\ti S_3$&  $\Diag(\xi_3,\xi_3^2,1,1,1)$,$\Diag(1,1,\xi_3,\xi_3^2,1)$, $(13)(24)$ &$X_{14}$   &$8$  & $(17,13)$ &  &\\
\hline
$(C_{3}\ti C_3)\rt C_2$&  $\Diag(\xi_3,\xi_3^2,1,1,1)$,$\Diag(1,1,\xi_3,\xi_3^2,1)$, $(12)(34)$ &$X_{14}$   &$-16$  & $(11,19)$ &  &\\
\hline
$C_{20}$&  $\Diag(\xi_4,1,\xi_4^3,1,1)$, $\Diag(1,1,\xi_5,\xi_5,\xi_5^3)$ &$X_{3}$   &$32$  & $(27,11)$ &  &\\
\hline
$D_{20}$&  $\Diag(-\xi_5,\xi_5,-\xi_5^4,\xi_5^4,1)$,$(13)(24)$ &$X_{3}$   &$8$  & $(15,11)$ &  & new \\
\hline
$\SL(2,3)$&  $A_{17,1}$, $A_{17,3}$&$X_{17}$   &$0$  & $(13,13)$ &  &\\
\hline
$D_{24}$&  $\Diag(\xi_3,\xi_3^2,\xi_3,\xi_3^2,1)$,$\Diag(1,1,\xi_4,\xi_4^3,1)$, $(12)(34)$ &$X_{20}$   &$8$  & $(17,13)$ &  &\\
\hline
 $C_{5}\ti C_5$&  $\Diag(\xi_5,\xi_5^4,1,1,1)$, $\Diag(1,1,\xi_5,\xi_5^4,1)$ &$X_{1}$   &$-8$  & $(17,21)$ &  &\\
 \hline
 $C_{5}\ti C_5$&  $\Diag(\xi_5,\xi_5^4,1,1,1)$, $\Diag(1,1,\xi_5,\xi_5,\xi_5^3)$ &$X_{1}$   &$88$  & $(49,5)$ &  &\\
 \hline
 $C_{5}\ti C_5$&  $\Diag(1,\xi_5,\xi_5,\xi_5^4,\xi_5^4)$, $\Diag(1,1,\xi_5,\xi_5,\xi_5^3)$ &$X_{1}$   &$40$  & $(21,1)$ &  &\\
 \hline
 $C_{5}\ti C_5$&  $\Diag(1,\xi_5,\xi_5^2,\xi_5^3,\xi_5^4)$,$(12345)$ &$X_{1}$   &$-8$  & $(1,5)$ & $C_5\ti C_5$ &\\
 \hline
 $C_{5}\ti S_3$&  $\Diag(\xi_3,\xi_3^2,\xi_5,\xi_5,\xi_5^3)$,$(12)(34)$ &$X_{14}$   &$56$  & $(33,5)$ &  &\\
 \hline
 $C_{5}\ti S_3$&  $\Diag(\xi_3,\xi_3^2,\xi_{15}^8,\xi_{15}^{13},\xi_5^3)$,  $(12)(34)$ &$X_{14}$   &$40$  & $(25,5)$ &  & new \\
 \hline
 $C_{3}\ti D_{10}$&  $\Diag(\xi_{15}^{11},\xi_{15}^1,\xi_{15}^{14},\xi_{15}^{4},1)$, $(13)(24)$ &$X_{14}$   &$40$  & $(25,5)$ &  & new\\
 \hline
 $D_{30}$&  $\Diag(\xi_3,\xi_3^2,\xi_5,\xi_5^4,1)$,  $(12)(34)$ &$X_{5}$   &$-16$  & $(11,19)$ &  &\\
 \hline
  $ D_{30}$&  $\Diag(\xi_{15}^{11},\xi_{15}^1,\xi_{15}^{14},\xi_{15}^{4},1)$, $(14)(23)$ &$X_{14}$   &$16$  & $(19,11)$ &  &\\
  \hline
   $ D_{34}$&  $\Diag(\xi_{17},\xi_{17}^{-4},\xi_{17}^{-1},\xi_{17}^{4},1)$, $(13)(24)$ &$X_{13}$   &$16$  & $(19,11)$ &  &\\
  \hline
   $ S_{3}\ti S_3$&  $\Diag(\xi_{3},\xi_{3}^2,1,1,1)$,$\Diag(1,1,\xi_{3},\xi_{3}^{2},1)$, $(14)(23)$,$(13)(24)$ &$X_{14}$   &$16$  & $(17,9)$ &  & new\\
  \hline
   $ C_{13}\rt C_3$&  $\Diag(\xi_{13},\xi_{13}^9,\xi_{13}^{3},1,1)$, $(123)$ &$X_{16}$   &$8$  & $(21,17)$ &  &\\
  \hline
   $ C_{13}\rt C_3$&  $\Diag(\xi_{13},\xi_{13}^9,\xi_{13}^{3},1,1)$, $A_{16,2}A_{16,4}$ &$X_{16}$   &$40$  & $(21,1)$ &  &\\
  \hline
    $ C_{39}$&  $\Diag(\xi_{13},\xi_{13}^9,\xi_{13}^{3},1,1)$, $\Diag(1,1,1,\xi_3,\xi_3^2)$ &$X_{16}$   &$200$  & $(101,1)$ &  &\\
  \hline
  $ D_{40}$&  $\Diag(\xi_{20}^9,\xi_{5},\xi_{20}^{11},\xi_{5}^{4},1)$, $(13)(24)$ &$X_{3}$   &$16$  & $(19,11)$ &  &\\
  \hline
   $ C_{41}$&  $\Diag(\xi_{41},\xi_{41}^{-4},\xi_{41}^{16},\xi_{41}^{18},\xi_{41}^{10})$ &$X_{15}$   &$200$  & $(101,1)$ &  &\\
  \hline
   $ C_{15}\ti C_3$&  $\Diag(\xi_{3},\xi_{3}^2,\xi_5,\xi_5,\xi_5^3)$, $\Diag(1,1,\xi_{3},\xi_{3}^2,1)$ &$X_{14}$   &$200$  & $(101,1)$ &  &\\
  \hline
   $ \GL(2,3)$& $A_{17,1}$, $A_{17,2}$, $A_{17,3}$ &$X_{17}$   &$24$  & $(19,7)$ &  &\\
  \hline
   $ C_5\ti D_{10}$&  $\Diag(1,1,\xi_{5},\xi_{5},\xi_{5}^{3})$,$\Diag(\xi_{5},\xi_{5}^4,\xi_{5},\xi_{5}^{4},1)$, $(12)(34)$ &$X_{1}$   &$8$  & $(17,13)$ &  &\\
  \hline
    $ C_5\ti D_{10}$&  $\Diag(1,1,\xi_{5},\xi_{5},\xi_{5}^{3})$, $\Diag(\xi_{5},\xi_{5}^4,1,1,1)$, $(12)(34)$ &$X_{1}$   &$56$  & $(33,5)$ &  &\\
  \hline
    $ C_5\ti D_{10}$&  $\Diag(1,1,\xi_{5},\xi_{5},\xi_{5}^{3})$,$\Diag(\xi_{5},\xi_{5}^4,\xi_{5}^2,\xi_{5}^{3},1)$, $(12)(34)$ &$X_{1}$   &$32$  & $(19,3)$ &  & in \cite{St12}\\
  \hline
    $ (C_5\ti C_{5})\rt C_2$&  $\Diag(1,1,\xi_{5},\xi_{5}^4,1)$, $\Diag(\xi_{5},\xi_{5}^4,1,1,1)$, $(12)(34)$ &$X_{1}$   &$-16$  & $(11,19)$ &  &\\
  \hline
    $ C_{51}$&  $\Diag(\xi_{51}^{20},\xi_{51}^{22},\xi_{51}^{14},\xi_{51}^{46},1)$ &$X_{13}$   &$200$  & $(101,1)$ &  &\\
  \hline
    $A_5$&  $(12345)$,  $(123)$ &$X_{1}$   &$-16$  & $(5,13)$ & &\\
  \hline
    $ S_3\ti D_{10}$&  $\Diag(\xi_{15}^8,\xi_{15}^{13},\xi_{15}^2,\xi_{15}^{7},1)$, $(12)(34)$, $(13)(24)$ &$X_{14}$   &$32$  & $(21,5)$ &  & new\\
  \hline
    $ C_{65}$&  $\Diag(1,1,1,\xi_{5},\xi_{5}^{4})$, $\Diag(\xi_{13},\xi_{13}^{-4},\xi_{13}^3,1,1)$ &$X_{9}$   &$200$  & $(101,1)$ &  &\\
  \hline
    $ (C_5\ti C_{5})\rt C_3$&  $\Diag(1,1,\xi_{5},1,\xi_{5}^{4})$, $(345)$ &$X_{1}$   &$8$  & $(21,17)$ &  &\\
  \hline
   $ (C_5\ti C_{5})\rt C_3$&  $\Diag(1,1,\xi_{5},1,\xi_{5}^{4})$, $(345)\Diag(\xi_3,\xi_3^2,1,1,1)$ &$X_{5}$   &$40$  & $(21,1)$ &  &\\
  \hline
   $ C_{15}\ti C_{5}$&  $\Diag(1,1,\xi_{5},1,\xi_{5}^{4})$, $\Diag(\xi_3,\xi_3^2,1,\xi_5,\xi_5^4)$ &$X_{5}$   &$200$  & $(101,1)$ &  &\\
  \hline
   $ C_{3}\ti D_{30}$&  $\Diag(\xi_{15}^{8},\xi_{15}^{13},\xi_{5}^4,\xi_{5}^{4},1)$,$\Diag(1,1,\xi_{3},\xi_{3}^2,1)$, $(14)(23)$&$X_{14}$   &$112$  & $(59,3)$ &  &\\
  \hline
   $ C_5\ti((C_3\ti C_{3})\rt C_2)$&  $\Diag(\xi_3,\xi_3^2,\xi_{5},\xi_5, \xi_{5}^{3})$,$\Diag(1,1,\xi_{3},\xi_{3}^2,1)$, $(12)(34)$ &$X_{14}$   &$112$  & $(59,3)$ &  &\\
  \hline
   $D_{10}\ti D_{10}$&  $\Diag(\xi_5,\xi_5,\xi_{5}^4,\xi_{5}^{4},1)$,$\Diag(\xi_5,\xi_5^4,\xi_{5},\xi_{5}^{4},1)$, $(12)(34)$, $(13)(24)$ &$X_{1}$   &$16$  & $(17,9)$ &  &new\\
  \hline
  $C_{3}\ti D_{34}$&  $\Diag(\xi_3,\xi_3^2,\xi_{3},\xi_{3}^{2},1)$, $\Diag(\xi_{17},\xi_{17}^{-4},\xi_{17}^{16},\xi_{17}^{-1},1)$, $(13)(24)$ &$X_{13}$   &$112$  & $(59,3)$ &  &\\
  \hline
   $C_{3}\ti (C_{13}\rt C_3)$&  $\Diag(\xi_{13},\xi_{13}^{-4},\xi_{13}^3,\xi_{3},\xi_3^2)$, $(123)$ &$X_{16}$   &$88$  & $(49,5)$ &  &\\
  \hline
   $(C_{5}\ti C_{5})\rt C_5$&  $\Diag(1, \xi_5,\xi_5^4,\xi_{5}^4,\xi_{5})$,   $(12345)$ &$X_{1}$   &$8$  & $(5,1)$ & $C_5$ &in \cite{ALR90} \\
  \hline
   $C_{5}\ti C_{5}\ti C_5$&  $\Diag(1,\xi_5,1,1,\xi_{5}^4)$, $\Diag(1,1,\xi_5,1,\xi_{5}^{4})$, $\Diag(1,1,1,\xi_5,\xi_{5}^{4})$ &$X_{1}$   &$200$  & $(101,1)$ &  &\\
  \hline
   $((C_{5}\ti C_{5})\rt C_3)\rt C_2$&  $\Diag(1,1, \xi_5,1,\xi_{5}^4)$, $\Diag(1,1,1,\xi_5,\xi_{5}^{4})$,  $(345)$, $(12)(34)$ &$X_{1}$   &$16$  & $(19,11)$ &  &\\
  \hline
   $((C_{5}\ti C_{5})\rt C_3)\rt C_2$&  $\Diag(1,1, \xi_5,1,\xi_{5}^4)$, $\Diag(1,1,1,\xi_5,\xi_{5}^{4})$,  $(345)\Diag(\xi_3,\xi_3^2,1,1,1)$, $(12)(34)$ &$X_{5}$   &$32$  & $(19,3)$ &  & in \cite{St12}\\
  \hline
   $C_5\ti D_{30}$&  $\Diag(1,1, \xi_5,\xi_5,\xi_{5}^3)$,$\Diag(\xi_3,\xi_3^2,\xi_5,\xi_{5}^{4},1)$, $(12)(34)$ &$X_{5}$   &$112$  & $(59,3)$ &  &\\
  \hline
   $(C_5\ti((C_3\ti C_3)\rt C_2))\rt C_2$&  $\Diag(1,1, \xi_3,\xi_{3}^2,1)$, $\Diag(\xi_{15}^8,\xi_{15}^{13},\xi_5^4,\xi_{5}^{4},1)$,  $(12)(34)$, $(13)(24)$ &$X_{14}$   &$80$  & $(41,1)$ &  &new\\
  \hline
   $C_5\ti (C_{13}\rt C_3)$&  $\Diag(\xi_{13},\xi_{13}^{-4}, \xi_{13}^3,\xi_5,\xi_{5}^4)$,  $(123)$ &$X_{9}$   &$88$  & $(49,5)$ &  &\\
  \hline
   $C_{41}\rt C_5$& $\Diag(\xi_{41},\xi_{41}^{-4},\xi_{41}^{16},\xi_{41}^{18},\xi_{41}^{10})$, $(12345)$ &$X_{15}$   &$40$  & $(21,1)$ &  &\\
  \hline
   $C_3\ti ((C_5\ti C_5)\rt C_3)$&  $\Diag(\xi_3,\xi_3^2, \xi_5,1,\xi_5^4)$, $\Diag(1,1,1,\xi_{5},\xi_5^4)$,  $(345)$ &$X_{5}$   &$88$  & $(49,5)$ &  &\\
  \hline
   $((C_{5}\ti C_{5})\rt C_5)\rt C_2$&  $\Diag(1,\xi_5, \xi_5^4,\xi_{5}^4,\xi_5)$, $\Diag(1,\xi_{5},\xi_5^2,\xi_{5}^{3},\xi_5^4)$,  $(12345)$, $(25)(34)$ &$X_{1}$   &$16$  & $(11,3)$ &  &new\\
  \hline
   $C_5\ti ((C_{5}\ti C_{5})\rt C_2)$&  $\Diag(1,\xi_5, \xi_5,\xi_{5}^4,\xi_5^4)$, $\Diag(1,\xi_{5},\xi_5^4,1,1)$, $\Diag(1,1,1,\xi_{5},\xi_5^4)$,  $(23)(45)$ &$X_{1}$   &$112$  & $(59,3)$ &  &\\
  \hline
  $C_5\ti ((C_{5}\ti C_{5})\rt C_3)$&  $\Diag(\xi_5, \xi_5^4,1,1,1)$, $\Diag(1,1,\xi_{5},1,\xi_5^4)$, $\Diag(1,1,1,\xi_{5},\xi_5^4)$,  $(345)$ &$X_{1}$   &$88$  & $(49,5)$ &  &\\
  \hline
   $(C_3\ti ((C_{5}\ti C_{5})\rt C_3))\rt C_2$&  $\Diag(\xi_3,\xi_3^2,\xi_5,1,\xi_{5}^4)$, $\Diag(1,1,1,\xi_{5},\xi_5^4)$, $(345)$,  $(12)(34)$ &$X_{5}$   &$56$  & $(33,5)$ &  &\\
  \hline
   $(C_5\ti ((C_{5}\ti C_{5})\rt C_2))\rt C_2$&  $\Diag(1,\xi_5, \xi_5,\xi_{5}^4,\xi_5^4)$, $\Diag(1,\xi_{5},\xi_5^4,1,1)$, $\Diag(1,1,1,\xi_{5},\xi_5^4)$,  $(23)(45)$, $(24)(35)$ &$X_{1}$   &$80$  & $(41,1)$ &  &new\\
  \hline
    $(C_{5}\ti C_{5}\ti C_5)\rt C_5$&  $\Diag(1,\xi_5,1,1,\xi_{5}^4)$, $\Diag(1,1,\xi_5,1,\xi_{5}^{4})$, $\Diag(1,1,1,\xi_5,\xi_{5}^{4})$, $(12345)$ &$X_{1}$   &$40$  & $(21,1)$ & $C_5$ &\\
  \hline
   $(C_5\ti ((C_{5}\ti C_{5})\rt C_3))\rt C_2$&  $\Diag(\xi_5, \xi_5^4,1,1,1)$, $\Diag(1,1,\xi_{5},1,\xi_5^4)$, $\Diag(1,1,1,\xi_{5},\xi_5^4)$,  $(345)$, $(12)(34)$ &$X_{1}$   &$56$  & $(33,5)$ &  &\\
  \hline
     $((C_{5}\ti C_{5}\ti C_5)\rt C_5)\rt C_2$&  $\Diag(1,\xi_5,1,1,\xi_{5}^4)$, $\Diag(1,1,\xi_5,1,\xi_{5}^{4})$, $\Diag(1,1,1,\xi_5,\xi_{5}^{4})$, $(12345)$, $(25)(34)$ &$X_{1}$   &$32$  & $(19,3)$ &  & in \cite{St12}\\
  \hline
    $((C_5\ti ((C_{5}\ti C_{5})\rt C_2))\rt C_2)\rt C_3$&  $\Diag(1,\xi_5, \xi_5,\xi_{5}^4,\xi_5^4)$, $\Diag(1,\xi_{5},\xi_5^4,1,1)$, $\Diag(1,1,1,\xi_{5},\xi_5^4)$,  $(23)(45)$, $(24)(35)$, $(345)$ &$X_{1}$   &$48$  & $(29,5)$ &  &\\
  \hline
     $(C_{5}\ti C_{5}\ti C_5)\rt A_5$&  $\Diag(1,\xi_5,1,1,\xi_{5}^4)$, $\Diag(1,1,\xi_5,1,\xi_{5}^{4})$, $\Diag(1,1,1,\xi_5,\xi_{5}^{4})$, $(12345)$, $(345)$ &$X_{1}$   &$24$  & $(15,3)$ & &new\\
  \hline
\end{longtable}
}

%\begin{remark}
%
%As in Table \ref{tablemain1}, $D$ in Table \ref{tablemain2} is defined to be $(a+b)H-C$.
%\end{remark}

\end{center}

\medskip

    \noindent
  {\bf Acknowledgement.}  \\[.2cm]
The author would like to express his deep thanks to Professor Keiji Oguiso for very valuable discussions and comments. He also would like to thank Professor Takayuki Hibi for financial support during his stay at Osaka University.

\medskip

\end{document}